\newtheorem{theorem}{Theorem}[section]
\newtheorem{prop}{Proposition}[section]
\newtheorem{lemma}{Lemma}[section]
\newtheorem{remark}{Remark}[section]
\newtheorem{exam}{Example}[section]
\newcommand{\ml}{\mathcal}
\newcommand{\mb}{\mathbb}
\def\XXint#1#2#3{{\setbox0=\hbox{$#1{#2#3}{\int}$ }
		\vcenter{\hbox{$#2#3$ }}\kern-.6\wd0}}
\title{$L^p-L^q$ estimates for the dissipative and conservative Moore-Gibson-Thompson equations}
\author[1]{Wenhui Chen\thanks{Wenhui Chen (wenhui.chen.math@gmail.com)}}
\affil[1]{School of Mathematics and Information Science, Guangzhou University, \authorcr 510006 Guangzhou, China}
\author[2]{Mengjun Ma\thanks{Mengjun Ma (mamj7@mail2.sysu.edu.cn)}}
\author[2]{Xulong Qin\thanks{Xulong Qin (qinxul@mail.sysu.edu.cn)}}
\affil[2]{School of Mathematics, Sun Yat-Sen University, 510275 Guangzhou, China}
\date{}
\begin{document}
	\maketitle

	\begin{abstract}
		\medskip
		This paper studies some $L^p-L^q$ estimates for the dissipative or conservative Moore-Gibson-Thompson (MGT) equations in the whole space $\mb{R}^n$. Our contributions are twofold. By applying the Fourier analysis associated with the modified Bessel function in the dissipative case, we derive some $L^p-L^q$ estimates of solutions. Then, introducing a good unknown related to the free wave equation in the conservative case, some $L^p-L^q$ estimates of solutions with the admissible closed triangle range of exponents are deduced. These results show some essential influences of dissipation from the MGT equations in the $L^q$ framework.
		\\
		
		\noindent\textbf{Keywords:} Moore-Gibson-Thompson equation, third-order hyperbolic equation, $L^p-L^q$ estimate, dissipation, asymptotic behavior.\\
		
		\noindent\textbf{AMS Classification (2020)} 35B40, 	35G10, 42B37 
	\end{abstract}
	\fontsize{12}{15}
	\selectfont

	\section{Introduction}\label{Section_Introduction}\setcounter{equation}{0}
	\hspace{5mm}In recent years, the Moore-Gibson-Thompson (MGT) equations, named after the early works of F.K. Moore, W.E. Gibson \cite{MooreGibson1960} in 1960 and of P.A. Thompson \cite{Thompson1972} in 1972, as well as some related models have caught a lot of attention. They arise in the nonlinear acoustics to describe the propagation of sound in thermo-viscous/inviscid fluids (cf. \cite{Blackstock-1963,Hamilton-Blackstock-1998,Jordan-2014}) from some applications of medical and industrial uses of high-intensity ultra sound, for example, medical imaging and therapy, ultrasound cleaning and welding (cf. \cite{Abramov-1999,Dreyer-Krauss-Bauer-Ried-2000,Kaltenbacher-Landes-Hoffelner-Simkovics-2002}). The well-known linear MGT equations are realized through the third-order hyperbolic partial differential equations (PDEs)
	\begin{align*}
	\tau\varphi_{ttt}+\varphi_{tt}-\Delta\varphi-(\delta+\tau)\Delta\varphi_t=0
	\end{align*}
with the thermal relaxation $\tau>0$ (from the Cattaneo law of heat conduction) and the diffusivity of sound $\delta\geqslant0$ (from the Navier-Stokes equations if $\delta>0$ or the Euler equations if $\delta=0$), where the unknown function $\varphi=\varphi(t,x)\in\mb{R}$ is referred to the acoustic velocity potential in the classical theory of acoustic waves. Therefore, one may notice that the presence of (viscous) dissipation $-\delta\Delta\varphi_t$ will greatly influence on the sound waves propagation physically and some qualitative properties of solutions mathematically. In particular, there is a transition in the linear models that can be described by an exponential stable strongly continuous semigroup in the case $\delta>0$ to the limit case $\delta=0$, where the exponential stability of semigroup  is lost and it holds the conservation of suitable defined energy \cite{Kaltenbacher-Lasiecka-Marchand-2011,Marchand-McDevitt-Triggiani-2012,Chen-Palmieri=2020}. For this reason, $\delta>0$ and $\delta=0$ in the MGT equations are always referred to the dissipative case and, respectively, the conservative case, whose solutions are endowed with the superscripts, i.e. $\varphi^{>0}$ and $\varphi^{=0}$, for the sake of simplicity.
	
	In this paper, we mainly contribute to qualitative properties of $L^q$ solutions to the following MGT equations (cf. \cite{Pellicer-Said-Houari=2019,Chen-Ikehata=2021,Chen-Takeda=2023,Chen-Gong=2024} for $\delta>0$ and \cite{Chen-Palmieri=2020,Chen=2024,Chen-Ikehata=2025} for $\delta=0$):
	\begin{align}\label{Eq_MGT}
		\begin{cases}
			\tau\varphi_{ttt}+\varphi_{tt}-\Delta\varphi-(\delta+\tau)\Delta\varphi_t=0,&x\in\mb{R}^n,\ t>0,\\
			\varphi(0,x)=\varphi_0(x),\ \varphi_t(0,x)=\varphi_1(x),\ \varphi_{tt}(0,x)=\varphi_2(x),&x\in\mb{R}^n, 
		\end{cases}
	\end{align}
with $\tau>0$ and $\delta\geqslant0$.
Our aim is to clarify asymptotic behavior of solutions to the dissipative MGT equations and the conservative MGT equations, determined by the value of $\delta$, in the $L^q$ ($q\neq2$) framework. Particularly, in the general $L^q$ framework, the parabolic-like structure in the dissipative case $\delta>0$ will be presented in Theorem \ref{thm:L^m-L^q-estimate}, whereas the hyperbolic-like (precisely, the wave-like) structure in the conservative case $\delta=0$ will be shown in Theorem \ref{Thm-Conser-MGT}.

 In the following we address a brief review on the MGT equations in the whole space $\mb{R}^n$ (concerning the bounded domain case, we refer the interested reader to \cite{Kaltenbacher-Lasiecka-Marchand-2011,Marchand-McDevitt-Triggiani-2012,Kaltenbacher-Lasiecka-Pos-2012,Conejero-Lizama-Rodenas-2015,Dell-Pata=2017,Kaltenbacher-Nikolic-2019,B-L-2020,Kaltenbacher-Niko-2021,Niko-Winker=2024} and references given therein).

Let us focus on the linear dissipative MGT equation \eqref{Eq_MGT} with $\delta>0$, which was initially studied by \cite{Pellicer-Said-Houari=2019}. To be specific, the authors of \cite{Pellicer-Said-Houari=2019} employed energy methods in the Fourier space combined with suitable Lyapunov functionals to derive energy estimates, and eigenvalues expansions to investigate some $L^2$ estimates for the solution itself. Soon afterwards, by applying the explicit representation of solutions and the Fourier analysis, \cite{Chen-Ikehata=2021} obtained large time optimal $L^2$ estimates (optimal growth for $n\leqslant 2$ and decay for $n\geqslant 3$).
As a continuation, in \cite{Chen-Takeda=2023} the authors made use of asymptotic analysis and refined Fourier analysis to capture its optimal leading term and second-order large time profile, which are determined by the diffusion waves. Then, the recent paper \cite{Chen-Gong=2024} introduced a good energy unknown to derive sharp $L^p-L^{q}$ estimates with $1\leqslant p\leqslant 2\leqslant q\leqslant +\infty$ for the energy term. Turning to the small parameter limits, \cite{Chen=2024} stated local (in time) inviscid limits as $\delta\downarrow 0$, and \cite{Chen-Ikehata=2021,Chen-Gong=2024} discovered global (in time) singular limits as well as higher-order asymptotic expansions of solutions associated with a singular layer as $\tau\downarrow 0$ via the multi-scale analysis and suitable energy methods in the Fourier space. However, the only available results in the literature \cite{Chen=2024,Chen-Ikehata=2025} concerning the linear conservative MGT equation \eqref{Eq_MGT} with $\delta=0$ concentrate on large time optimal $L^2$ estimates (optimal growth for $n\leqslant 2$ and boundedness for $n\geqslant 3$). These known results imply the loss of decay properties (when $n\geqslant 3$) if one drops the dissipation $-\delta\Delta\varphi_t$, but the reservation of growth properties (when $n\leqslant 2$), in the $L^2$ framework.

We now turn our review to nonlinear MGT equations in $\mb{R}^n$. The Cauchy problem for the semilinear MGT equations was firstly studied by \cite{Chen-Palmieri=2020,Chen-Palmieri=2021} with power nonlinearities $|\varphi|^m$ or $|\varphi_t|^m$. By developing an iteration method with slicing procedure to deal with the unbounded exponential  multiplier, they obtained blow-up of energy solutions for the semilinear conservative MGT equations with non-negative and compactly supported data under some conditions on the power $m$ (i.e. the sub-Strauss case and the sub-Glassey case). Later, in the dissipative situation $\delta>0$, \cite{Chen-Ikehata=2021,Shi-Zhang-Cai-Liu=2022} rigorously justified existence results for small data global (in time) $L^2$ solutions by using sharp $(L^2\cap L^1)-L^2$ estimates, and finite time blow-up results for weak solutions by using classical test function methods, under suitable conditions on the power $m$. Another modern topic is the dissipative Jordan-MGT equation (see \cite{Jordan-2014} for its detailed derivation by the Lighthill scheme of approximation to the fully compressible Navier-Stokes-Cattaneo equations under irrotational flows) in the whole space $\mb{R}^n$ whose global (in time) well-posedness and $L^2$ growth/decay estimates results were derived by \cite{Racke-Said-2020,Said-Houari=Large-Norm,Chen-Takeda=2023} for the $H^s$ solutions, and \cite{Said-Houari=Besov} for the $B_{2,1}^s$ solutions (here, $B_{2,1}^s$ denotes the Besov space based on $L^2$). On the contrary, concerning the Cauchy problem for the conservative Jordan-MGT equation, the authors of \cite{Chen-Liu-Palmieri-Qin=2023} discovered that energy solutions blow up in finite time when $n\leqslant 3$ by assuming some suitable weighted assumptions on the Cauchy data, where the upper bounds of lifespan were also estimated. 

 To the best of our knowledge, the previous studies are heavily based on the $L^2$ framework (energy methods and the Plancherel theorem can be widely used in different settings). Nevertheless, the qualitative properties of solutions to dissipative or conservative MGT equations in the $L^q$ framework are generally open, even for their linearized models. It is well-known that the investigation for linear problems is not
 only significant for understanding some underlying physical phenomena but also crucial for studying some corresponding nonlinear problems. For these reasons, we will partly answer the above questions by studying the linear Cauchy problems \eqref{Eq_MGT} for $\delta>0$ and $\delta=0$, respectively, in this manuscript via Theorems \ref{thm:L^m-L^q-estimate} and \ref{Thm-Conser-MGT}.
	
	\paragraph{\large Notation} The generic constants $c,C$ may change from line to line but are independent of the time variable. We write $f\lesssim g$ if there exists a positive constant $C$ such that $f\leqslant Cg$. Basing on the Lebesgue space $L^p$, we denote by $H^s_p$ and $\dot{H}^s_p$, respectively, the Bessel potential space and the Riesz (or homogeneous Bessel) potential space with $s\in\mb{R}$. The differential operator $|D|^{s}$ has its symbol $|\xi|^s$ with $s\in\mb{R}$. The H\"older conjugate of $p$ is $p'$ such that $1/p+1/p'=1$. We denote by $\lfloor a\rfloor:=\max\{A\in\mb{Z}: A\leqslant a\in\mb{R}\}$ the floor function, and by $[a]_+:=\max\{a,0\}$ the positive part of $a\in\mb{R}$. 

	\section{Dissipative MGT equation in the $L^q$ framework}\label{Section_MGT}\setcounter{equation}{0}
\hspace{5mm}Let us point out that our study on the dissipative MGT equation \eqref{Eq_MGT} in the $L^q$ framework with $q\in(1,+\infty)$ is not simply a generalization of previous literature \cite{Pellicer-Said-Houari=2019,Chen-Ikehata=2021,Chen-Takeda=2023,Chen-Gong=2024}. 
\begin{itemize}
	\item Different from the classical works \cite{Pellicer-Said-Houari=2019,Chen-Ikehata=2021,Chen-Takeda=2023} in the $L^2$ framework, the Plancherel equality $\|f\|_{L^2}=\|\widehat{f}\|_{L^2}$ does not work anymore in the general $L^q$ framework.
	\item Different from the recent paper \cite{Chen-Gong=2024} in the $L^q$ framework with $q\in[2,+\infty)$,  the well-known inequality $\|f\|_{L^q}\leqslant\|\widehat{f}\|_{L^{q'}}$ does not hold when $q\in(1,2)$, where it highly restricted the initial data belonging to $L^p$ with $p\in[1,2]$.
	\item Different from the consideration in \cite{Chen-Gong=2024} for a suitable energy term containing $\varphi_t^{>0}$ and $|D|\varphi^{>0}$, it is interesting to understand qualitative properties for the solution itself instead of the last energy terms.
\end{itemize}
To sum up, the phase space analysis cannot be directly applied   in the general $L^q$ framework for our models. Motivated by \cite{Narazaki-Reissig=2013,Dao-Reissig=2019,Dao-Reissig=2019-02,Duong-Dao-Bui=2025} for second-order (in time) damped wave equations, we are going to apply the WKB analysis and the Fourier analysis associated with the modified Bessel function or the Bernstein theorem (cf. Appendix \ref{Appendix-A}) to the dissipative MGT equation \eqref{Eq_MGT}. It seems to be the first work on higher-order (in time) PDEs by these approaches.

\subsection{Representation of solution in the Fourier space}
\hspace{5mm}We as usual apply the partial Fourier transform with respect to the spatial variables to the linear Cauchy problem \eqref{Eq_MGT} with $\delta>0$ which yields 
\begin{align*}
	\begin{cases}
		\tau\widehat{\varphi}_{ttt}^{>0}+\widehat{\varphi}_{tt}^{>0}+(\delta+\tau)|\xi|^2\widehat{\varphi}_t^{>0}+|\xi|^2\widehat{\varphi}^{>0}=0,&\xi\in\mb{R}^n,\ t>0,\\
		\widehat{\varphi}^{>0}(0,\xi)=\widehat{\varphi}_0^{>0}(\xi),\ \widehat{\varphi}_t^{>0}(0,\xi)=\widehat{\varphi}_1^{>0}(\xi),\ \widehat{\varphi}_{tt}^{>0}(0,\xi)=\widehat{\varphi}_2^{>0}(\xi),&\xi\in\mb{R}^n.
	\end{cases}
\end{align*}
Its corresponding characteristic equation is given by the $|\xi|$-dependent cubic
\begin{align}\label{Eq_characteristic_root}
	\tau\lambda^3+\lambda^2+(\delta+\tau)|\xi|^2\lambda+|\xi|^2=0.
\end{align}
Then, the roots $\lambda_j=\lambda_j(|\xi|)$ with $j=1,2,3$ to \eqref{Eq_characteristic_root} can be expanded straightforwardly by Taylor-like expansions as follows:
\begin{itemize}
	\item for small frequencies  $|\xi|\ll 1$ (see \cite[Proposition 2.3]{Chen-Takeda=2023}),
	\begin{align*}
		\lambda_1(|\xi|)&=-\frac{1}{\tau}+\delta |\xi|^2+O(|\xi|^4),\\ \mu_{\mathrm{R}}(|\xi|)&=-\frac{\delta}{2}|\xi|^2-\frac{\tau\delta(\delta-\tau)}{2}|\xi|^4+O(|\xi|^6),\\ \mu_{\mathrm{I}}(|\xi|)&=|\xi|+\frac{\delta(4\tau-\delta)}{8}|\xi|^3+O(|\xi|^5);
	\end{align*}
	\item for large frequencies $|\xi|\gg1$ (we require higher-order expansions than those in \cite{Pellicer-Said-Houari=2019,Chen-Ikehata=2021,Chen-Takeda=2023} in order to estimate the remainders precisely),
	\begin{align*}
		\lambda_1(|\xi|)&=-\frac{1}{\delta+\tau}-\frac{\delta}{(\delta+\tau)^4}|\xi|^{-2}+O(|\xi|^{-4}),\\ \mu_{\mathrm{R}}(|\xi|)&=-\frac{\delta}{2\tau(\delta+\tau)}+\frac{\delta}{2(\delta+\tau)^4}|\xi|^{-2}+O(|\xi|^{-4}),\\ \mu_{\mathrm{I}}(|\xi|)&=\sqrt{\frac{\delta+\tau}{\tau}}|\xi|-\frac{\delta(\delta+4\tau)}{8\tau(\delta+\tau)^3}\sqrt{\frac{\delta+\tau}{\tau}}|\xi|^{-1}+O(|\xi|^{-3});
	\end{align*}
\end{itemize}
where $\lambda_{2,3}=\mu_{\mathrm{R}}\pm i\mu_{\mathrm{I}}$ are complex conjugate for small and large frequencies because the discriminant of the cubic \eqref{Eq_characteristic_root} is strictly negative in both cases.

 By the classical ordinary differential equations theory with the pairwise distinct characteristic roots, one may obtain the representation of solution in the Fourier space localizing in the small as well as large frequencies zones
\begin{align*}
	\notag\widehat{\varphi}^{>0}&=\left(\frac{-(\mu_{\mathrm{I}}^2+\mu_{\mathrm{R}}^2)}{\Lambda_0}\,\mathrm{e}^{\lambda_1 t}+\frac{2\mu_{\mathrm{R}}\lambda_1-\lambda_1^2}{\Lambda_0}\cos (\mu_{\mathrm{I}} t)\,\mathrm{e}^{\mu_{\mathrm{R}} t}+\frac{\lambda_1(\mu_{\mathrm{R}}\lambda_1+\mu_{\mathrm{I}}^2-\mu_{\mathrm{R}}^2)}{\mu_{\mathrm{I}}\Lambda_0}\sin (\mu_{\mathrm{I}} t)\,\mathrm{e}^{\mu_{\mathrm{R}} t}\right)\widehat{\varphi}_0^{>0}\\ \notag 
	&\quad+\left(\frac{2\mu_{\mathrm{R}}}{\Lambda_0}\,\mathrm{e}^{\lambda_1 t}+\frac{-2\mu_{\mathrm{R}}}{\Lambda_0}\cos (\mu_{\mathrm{I}} t)\,\mathrm{e}^{\mu_{\mathrm{R}} t}+\frac{\mu_{\mathrm{R}}^2-\mu_{\mathrm{I}}^2-\lambda_1^2}{\mu_{\mathrm{I}}\Lambda_0}\sin (\mu_{\mathrm{I}} t)\,\mathrm{e}^{\mu_{\mathrm{R}} t}\right)\widehat{\varphi}_1^{>0}\\ \notag 
	& \quad +\left(\frac{-1}{\Lambda_0}\,\mathrm{e}^{\lambda_1 t}+\frac{1}{\Lambda_0}\cos (\mu_{\mathrm{I}} t)\,\mathrm{e}^{\mu_{\mathrm{R}} t}+\frac{-(\mu_{\mathrm{R}}-\lambda_1)}{\mu_{\mathrm{I}}\Lambda_0}\sin (\mu_{\mathrm{I}} t)\,\mathrm{e}^{\mu_{\mathrm{R}} t}\right)\widehat{\varphi}_2^{>0}\\ 
	&:=\left(\widehat{K}_0^1+\widehat{K}_0^{\cos}+\widehat{K}_0^{\sin}\right)\widehat{\varphi}_0^{>0}+\left(\widehat{K}_1^1+\widehat{K}_1^{\cos}+\widehat{K}_1^{\sin}\right)\widehat{\varphi}_1^{>0}+\left(\widehat{K}_2^1+\widehat{K}_2^{\cos}+\widehat{K}_2^{\sin}\right)\widehat{\varphi}_2^{>0},
\end{align*}
where we set $\Lambda_0:=2\mu_{\mathrm{R}}\lambda_1-\mu_{\mathrm{I}}^2-\mu_{\mathrm{R}}^2-\lambda_1^2$ for simplicity. Furthermore, we denote the kernels for each initial data via
\begin{align}\label{widehat-K-expression}
	\widehat{K}_{\ell}:=\widehat{K}_{\ell}^1+\widehat{K}_{\ell}^{\cos}+\widehat{K}_{\ell}^{\sin} \ \mbox{with} \ \ell=0,1,2.
\end{align}
The last explicit representation is a reorganization of \cite[Equation (23)]{Chen-Takeda=2023} according to the features of Fourier multipliers.

\subsection{Preliminary on estimates for Fourier multipliers}
\hspace{5mm}We now introduce the radially symmetric as well as smooth cut-off functions $\chi_1(|\xi|)$, $\chi_2(|\xi|)$ and $\chi_3(|\xi|)$, respectively, by 
\begin{align*}
	\chi_1(|\xi|):=\begin{cases}
		1 & \mbox{if}\ |\xi|\leqslant \epsilon_0,\\ 
		0 &\mbox{if}\  |\xi|\geqslant 2\epsilon_0,\end{cases}\qquad \chi_3(|\xi|):=\begin{cases}
		1 &\mbox{if}\  |\xi|\geqslant 2N_0,\\ 
		0 &\mbox{if}\  |\xi|\leqslant N_0,\end{cases}
\end{align*}
and $\chi_2(|\xi|):=1-\chi_1(|\xi|)-\chi_3(|\xi|)$ with $0<\epsilon_0\ll 1$ as well as $N_0\gg 1$. We hereafter simply denote $\|f\|_{L^q_{\chi_k}}:=\|\chi_{k}(|D|)f\|_{L^q}$ with $k=1,2,3$.

Notice that $\mathrm{Re}\, \lambda_j(|\xi|)<0$ for any $j=1,2,3$  and $\{\xi\in\mb{R}^n: |\xi|\leqslant \epsilon_0\ \mbox{or}\ |\xi|\geqslant 2N_0\}$ from the last subsection. Thanks to the negative real parts of eigenvalues and the compactness of bounded frequency zone, we trivially claim an exponential stability for bounded frequencies.
\begin{prop}\label{prop:L^m-L^q-middle}
	Let $1\leqslant p\leqslant q\leqslant +\infty$ and $s\geqslant0$.	The solution localizing in the bounded frequency zone to the dissipative MGT equation \eqref{Eq_MGT} with $\delta>0$ satisfies the following $L^p-L^q$ estimates: 
	\begin{align*}
		\|\varphi^{>0}(t,\cdot)\|_{H^s_{q,\chi_2}}\lesssim \mathrm{e}^{-ct}\|(\varphi_0^{>0},\varphi_1^{>0},\varphi_2^{>0})\|_{(L^p)^3}.
	\end{align*}
\end{prop}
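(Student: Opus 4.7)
My approach is to reduce the statement to a convolution estimate with a kernel that is Schwartz in $x$ and decays exponentially in $t$. Using the Fourier representation \eqref{widehat-K-expression}, I would work with the three multipliers $\chi_2(|\xi|)|\xi|^s\widehat{K}_\ell(t,\xi)$ for $\ell=0,1,2$. Since $\chi_2$ is supported in the compact annulus $\{\epsilon_0\leqslant|\xi|\leqslant 2N_0\}$ and the factor $|\xi|^s$ is smooth and bounded there, it suffices to show that each $\chi_2\widehat{K}_\ell(t,\cdot)$ is a smooth compactly supported function in $\xi$ whose Schwartz-norms are bounded by $C_N\,\mathrm{e}^{-ct}$ for every $N\in\mb{N}$. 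Taking the inverse Fourier transform then yields a convolution kernel $K_\ell^{\chi_2}(t,\cdot)$ belonging to every $L^r(\mb{R}^n)$ with $\|K_\ell^{\chi_2}(t,\cdot)\|_{L^r}\lesssim\mathrm{e}^{-ct}$, and Young's convolution inequality delivers the $L^p$-$L^q$ bound for any $1\leqslant p\leqslant q\leqslant+\infty$.

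\textbf{Eigenvalue analysis and smoothness.} The key quantitative input is that on the compact middle zone the three characteristic roots of \eqref{Eq_characteristic_root} have real parts uniformly bounded above by a negative constant $-c<0$. For this I would apply the Routh-Hurwitz criterion to $\tau\lambda^3+\lambda^2+(\delta+\tau)|\xi|^2\lambda+|\xi|^2$: all coefficients are positive for $|\xi|>0$, and the required positivity reduces to $1\cdot(\delta+\tau)|\xi|^2-\tau\cdot|\xi|^2=\delta|\xi|^2>0$, which holds precisely because $\delta>0$ and $|\xi|$ is bounded away from zero on the support of $\chi_2$. Continuity of the roots in $|\xi|$ together with compactness of the annulus yields a uniform spectral gap. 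To control $\xi$-derivatives of the multipliers, I would recast \eqref{Eq_MGT} in the Fourier space as a first-order system with a $3\times 3$ matrix $M(|\xi|)$ depending smoothly on $\xi$, and use the smooth matrix exponential $\mathrm{e}^{tM(|\xi|)}$; differentiating in $\xi$ produces at most polynomial factors in $t$ that are absorbed by the exponential gap, giving bounds of the form $|\partial_\xi^\alpha(\chi_2\widehat{K}_\ell)(t,\xi)|\lesssim (1+t)^{|\alpha|}\mathrm{e}^{-ct}\lesssim \mathrm{e}^{-ct/2}$.

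\textbf{Main obstacle.} The chief subtlety is that the explicit formula for $\widehat{K}_\ell$ relies on the denominator $\Lambda_0=2\mu_{\mathrm{R}}\lambda_1-\mu_{\mathrm{I}}^2-\mu_{\mathrm{R}}^2-\lambda_1^2$, whose non-vanishing was only justified for $|\xi|\ll1$ and $|\xi|\gg 1$; in the middle zone the discriminant of \eqref{Eq_characteristic_root} can change sign and eigenvalues may coincide, producing spurious singularities in \eqref{widehat-K-expression}. The matrix-exponential representation circumvents this completely, since the solution operator is always smooth in $\xi$ regardless of eigenvalue crossings, and only the uniform spectral-gap bound enters the final estimate. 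Once this smooth-in-$\xi$ substitute for $\widehat{K}_\ell$ is in place on the support of $\chi_2$, the rest of the argument is a direct application of Young's convolution inequality combined with the observation that $|\xi|^s$ is a bounded smooth factor on that support, so the derivative loss appearing in $H^s_q$ is absorbed without cost.
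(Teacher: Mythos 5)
Your proof is correct. The paper offers no written proof of this proposition at all: it simply asserts that the negativity of $\mathrm{Re}\,\lambda_j$ together with compactness of the annulus makes the exponential stability ``trivial,'' and the preceding text only verifies $\mathrm{Re}\,\lambda_j<0$ for $|\xi|\leqslant\epsilon_0$ and $|\xi|\geqslant 2N_0$. Your argument supplies exactly the two pieces the paper leaves implicit. First, the Routh--Hurwitz computation $a_2a_1-a_3a_0=(\delta+\tau)|\xi|^2-\tau|\xi|^2=\delta|\xi|^2>0$ is the right way to get the spectral gap on the whole annulus $\epsilon_0\leqslant|\xi|\leqslant 2N_0$ (not just asymptotically), and compactness upgrades it to a uniform bound $\mathrm{Re}\,\lambda_j\leqslant -2c$. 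Second, your observation that the explicit kernels \eqref{widehat-K-expression} are only legitimate where $\Lambda_0\neq 0$ --- i.e.\ where the roots are pairwise distinct, which the paper only checks for small and large frequencies --- is a genuine subtlety, and replacing them by the matrix exponential $\mathrm{e}^{tM(|\xi|)}$ of the companion system is a clean way around it: smoothness in $\xi$ then holds regardless of eigenvalue crossings, the $\xi$-derivatives cost at most polynomial factors in $t$ absorbed by $\mathrm{e}^{-2ct}$, and the inverse Fourier transform of the resulting compactly supported smooth multiplier lies in every $L^r$ with norm $\lesssim\mathrm{e}^{-ct}$, so Young's inequality with $1/r=1+1/q-1/p$ finishes the proof. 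The only cosmetic remark is that the $H^s_q$ norm carries the symbol $\langle\xi\rangle^s$ rather than $|\xi|^s$, but both are smooth and bounded on the support of $\chi_2$, so nothing changes.
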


For this reason,  the remaining parts of this section study $L^p-L^q$ estimates for the solutions localizing in the small and large frequencies zones, respectively, in Subsections \ref{Sub-Section-Small} and \ref{Sub-Section-Large}. Then, by gluing all derived $L^p-L^q$ estimates in different zones, we in Subsection \ref{Sub-Section-Smuumary} conclude some refined $L^q$ estimates of solutions $\partial_t^j|D|^s\varphi^{>0}(t,\cdot)$ to the dissipative MGT equation for any $q\in(1,+\infty)$.

As preparations, we propose the next lemma in the $L^1$ norm to deal with several Fourier multipliers related to $\lambda_{2,3}$.
\begin{lemma}\label{lem-oscillating-sincos}
    Let $n\geqslant 1$ and $\beta\geqslant 0$. The following $L^1$ estimates hold:
	\begin{align*}
        \left\| \ml{F}^{-1}_{\xi\to x}\left(\chi_1(|\xi|)\,\mathrm{e}^{-c_1|\xi|^2t}\,|\xi|^{2\beta}\frac{\sin(c_2|\xi|t)}{c_2|\xi|}\right)\right\|_{L^1}&\lesssim (1+t)^{\frac{1}{2}(2+\lfloor\frac{n}{2}\rfloor)+\frac{1}{2}-\beta} ,\\ 
        \left\| \ml{F}^{-1}_{\xi\to x}\left(\chi_1(|\xi|)\,\mathrm{e}^{-c_1|\xi|^2t}\,|\xi|^{2\beta}g_0(|\xi|t)\right)\right\|_{L^1}&\lesssim \begin{cases}
        	(1+t)^{\frac{n}{4}-\beta}&\mbox{if}\ \beta\in\{0\}\cup(\frac{1}{2},+\infty),\\
        	(1+t)^{\frac{1}{2}(2+\lfloor\frac{n}{2}\rfloor)-\beta}&\mbox{if}\ \beta\in(0,\frac{1}{2}],
        \end{cases}
    \end{align*}
	where $g_0(|\xi|t)\in\{\sin(c_2|\xi|t),\cos(c_2|\xi|t)\}$ with $c_1>0$ and $c_2\in \mathbb{R}\backslash\{0\}$.
\end{lemma}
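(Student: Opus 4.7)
The strategy is to reduce each $L^1$ norm to a weighted $L^2$ norm on the symbol via the Bernstein-type interpolation
\begin{align*}
\|\ml F^{-1}_{\xi\to x}(\hat m)\|_{L^1(\mb R^n)}\lesssim \|\hat m\|_{L^2(\mb R^n)}^{1-\frac{n}{2N}}\bigl\||x|^N\ml F^{-1}_{\xi\to x}(\hat m)\bigr\|_{L^2(\mb R^n)}^{\frac{n}{2N}},
\end{align*}
which holds for every integer $N>n/2$ and is obtained by splitting $\mb R^n$ into $\{|x|\leqslant R\}$ and $\{|x|>R\}$ with Cauchy--Schwarz and optimizing $R$; this is the Bernstein-type lemma of Appendix~\ref{Appendix-A}. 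I would take $N:=2+\lfloor\frac{n}{2}\rfloor$ and invoke Plancherel to turn $\||x|^N\ml F^{-1}_{\xi\to x}(\hat m)\|_{L^2}$ into $L^2$ norms of derivatives $\partial^\alpha_\xi\hat m$ of order $|\alpha|=N$; this is where the integer $2+\lfloor\frac{n}{2}\rfloor$ in the statement comes from.

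\textbf{First estimate.} Let $\hat m_\beta(\xi,t):=\chi_1(|\xi|)\mathrm{e}^{-c_1|\xi|^2t}|\xi|^{2\beta}\sin(c_2|\xi|t)/(c_2|\xi|)$. The sinc factor is entire in $|\xi|$ (its Taylor expansion starts at $t$, so no artificial singularity is introduced) and satisfies both $|\sin(c_2|\xi|t)/(c_2|\xi|)|\lesssim\min(t,|\xi|^{-1})$ and the uniform pointwise bound $|\partial^k_\xi[\sin(c_2|\xi|t)/(c_2|\xi|)]|\lesssim t^{k+1}$ on $\mathrm{supp}\,\chi_1$. Combining with the Gaussian scaling $\xi\leadsto\eta/\sqrt{1+t}$ gives $\|\hat m_\beta\|_{L^2}\lesssim(1+t)^{\frac{1}{2}-\frac{n}{4}-\beta}$, while the Leibniz expansion yields $\|\partial^\alpha_\xi\hat m_\beta\|_{L^2}\lesssim(1+t)^{|\alpha|+\frac{1}{2}-\frac{n}{4}-\beta}$ for $|\alpha|\leqslant N$; terms in which derivatives fall on $\chi_1$ are supported on the annulus $|\xi|\sim\epsilon_0$ and are exponentially damped by the Gaussian. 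Substituting into the interpolation then produces the claimed $(1+t)^{\frac{1}{2}(2+\lfloor\frac{n}{2}\rfloor)+\frac{1}{2}-\beta}$.

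\textbf{Second estimate and the main obstacle.} For $\hat n_\beta(\xi,t):=\chi_1(|\xi|)\mathrm{e}^{-c_1|\xi|^2t}|\xi|^{2\beta}g_0(|\xi|t)$ with $|g_0|\leqslant 1$, the same scheme gives $\|\hat n_\beta\|_{L^2}\lesssim(1+t)^{-\frac{n}{4}-\beta}$ without the extra $(1+t)^{1/2}$ gain coming from the sinc. In the two smooth regimes $\beta=0$ (trivial $|\xi|^{2\beta}$) and $\beta>\frac{1}{2}$ (where $|\partial^\alpha_\xi(|\xi|^{2\beta})|\lesssim|\xi|^{2\beta-|\alpha|}$ remains $L^2$-integrable against the Gaussian for $|\alpha|\leqslant N$), the interpolation directly yields the sharper rate $(1+t)^{\frac{n}{4}-\beta}$. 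The principal obstacle is the intermediate range $\beta\in(0,\frac{1}{2}]$, where $|\xi|^{2\beta}$ is only Hölder continuous at $\xi=0$ and $\partial^\alpha_\xi(|\xi|^{2\beta})$ is not locally $L^2$ near the origin once $|\alpha|$ is close to $N$. To circumvent this I would split the support $\{|\xi|\leqslant 2\epsilon_0\}$ into an inner ball $\{|\xi|\leqslant (1+t)^{-1/2}\}$, estimated by the crude bound $\|\ml F^{-1}(\hat n_\beta\mathbf 1_{|\xi|\leqslant (1+t)^{-1/2}})\|_{L^1}\lesssim |\mathrm{supp}|^{1/2}\|\hat n_\beta\|_{L^2}$ without any derivatives, and the outer annulus $\{(1+t)^{-1/2}<|\xi|\leqslant 2\epsilon_0\}$, on which $|\xi|$ is bounded below from $(1+t)^{-1/2}$ so that the full $N$ derivatives can be taken with a pointwise loss at most $(1+t)^{(|\alpha|-2\beta)/2}$; balancing the two contributions produces the weaker rate $(1+t)^{\frac{1}{2}(2+\lfloor\frac{n}{2}\rfloor)-\beta}$ appearing in the statement, and accounts for the dichotomy in the exponent.
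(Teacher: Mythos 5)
Your overall device (Bernstein interpolation reducing the $L^1$ bound to $L^2$ bounds of the symbol and its $N$-th order derivatives) is the same one the paper uses, but the paper only uses it for the second estimate in the range $\beta\in\{0\}\cup(\tfrac12,+\infty)$; the first estimate and the range $\beta\in(0,\tfrac12]$ are \emph{quoted} from Narazaki--Reissig and Dao--Reissig, whose proofs go through the radial modified-Bessel representation and integration by parts in $r$. Your attempt to cover those remaining ranges by the same interpolation has concrete gaps. First, the asserted bound $\|\partial_\xi^\alpha\hat m_\beta\|_{L^2}\lesssim(1+t)^{|\alpha|+\frac12-\frac n4-\beta}$ for $|\alpha|=N=2+\lfloor\frac n2\rfloor$ is false for small $\beta>0$: when all derivatives fall on $|\xi|^{2\beta}$ one gets a term of size $|\xi|^{2\beta-N}\mathrm{e}^{-c|\xi|^2t}$, whose squared $L^2$ norm is $\int_0^{2\epsilon_0}r^{4\beta-2N+n-1}\mathrm{e}^{-2cr^2t}\,\mathrm{d}r$, divergent unless $4\beta>2N-n\in\{3,4\}$. (Incidentally, had your two $L^2$ bounds been true, the interpolation would return $(1+t)^{\frac n4+\frac12-\beta}$, not the stated exponent --- a sign that the derivative bound cannot hold in general.) Second, the same integrability constraint shows that your choice $N=2+\lfloor\frac n2\rfloor$ is too large even in the case you call smooth: for even $n$ it forces $\beta>1$, so the range $\beta\in(\tfrac12,1]$ of the second estimate is not covered. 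The paper takes the minimal admissible $N=1+\lfloor\frac n2\rfloor$ precisely so that the condition $4\beta-2N+n>0$ reduces to $\beta>\tfrac12$.

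Third, the inner-ball step of your splitting for $\beta\in(0,\tfrac12]$ rests on an invalid inequality: $\|\ml F^{-1}_{\xi\to x}(g\,\mathbf 1_{|\xi|\leqslant(1+t)^{-1/2}})\|_{L^1(\mb R^n)}$ is not controlled by $|\mathrm{supp}\,g|^{1/2}\|g\|_{L^2}$; Cauchy--Schwarz on the frequency side bounds the $L^\infty$ norm of the inverse transform, not its $L^1$ norm over all of $\mb R^n$, and the sharp cutoff itself has an inverse transform outside $L^1$. Even with a smooth cutoff at scale $(1+t)^{-1/2}$, the offending singularity of $\partial_\xi^N(|\xi|^{2\beta})$ sits at $\xi=0$, \emph{inside} the inner ball, so localizing does not remove it. For these ranges one genuinely needs the radial representation via the modified Bessel function with repeated integration by parts in $r$ (each step costs one power of $r$ and gains $|x|^{-1}$, so the integrable weight $r^{2\beta+n-1}$ survives near the origin); this is how the cited lemmas are proved and why the paper simply invokes them rather than running the Bernstein argument there.
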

\begin{remark}In \cite[Lemma 3.2 with $\delta=\sigma=1$]{Dao-Reissig=2019}, they got the estimate
\begin{align*}
        \left\| \ml{F}^{-1}_{\xi\to x}\left(\chi_1(|\xi|)\,\mathrm{e}^{-c_1|\xi|^2t}\,|\xi|^{2\beta}\cos(c_2|\xi|t)\right)\right\|_{L^1}\lesssim (1+t)^{\frac{1}{2}(2+\lfloor\frac{n}{2}\rfloor)-\beta},
\end{align*}
but we partly improve their result when $\beta\in\{0\}\cup(\frac{1}{2},+\infty)$, i.e. in Lemma \ref{lem-oscillating-sincos} with the better  rate $(1+t)^{\frac{n}{4}-\beta}$ due to the fact that $\frac{n}{4}<1+\frac{1}{2}\lfloor\frac{n}{2}\rfloor$ for any $n\geqslant 1$. We conjecture that the assumption for $\beta\in\{0\}\cup(\frac{1}{2},+\infty)$ is technical from the integer $N$ in the Bernstein theorem.
\end{remark}
\begin{proof}
The first estimate was deduced in \cite[Lemma 3.1 with $\delta=\sigma=1$]{Dao-Reissig=2019} and \cite[Corollary 3]{Narazaki-Reissig=2013}, respectively. In order to improve \cite[Lemma 3.2 with $\delta=\sigma=1$]{Dao-Reissig=2019} under $\beta\in\{0\}\cup(\frac{1}{2},+\infty)$ by our second estimate, we will use a different strategy instead of the modified Bessel function. Note that the boundedness of it for $t\leqslant 1$ is trivial in the second estimate (see, for example, \cite[Lemma 3.2 with $\delta=\sigma=1$ and $t\leqslant 1$]{Dao-Reissig=2019}), and thus we are going to consider $t\geqslant 1$ only. By using the Bernstein theorem (cf. Lemma \ref{Bernstein-Theorem}), we derive
	\begin{align*}
		&\left\| \ml{F}^{-1}_{\xi\to x}\left(\chi_1(|\xi|)\,\mathrm{e}^{-c_1|\xi|^2t}\,|\xi|^{2\beta}g_0(|\xi|t)\right)\right\|_{L^1} \notag \\ 
		& \lesssim \left\| \chi_1(|\xi|)\,\mathrm{e}^{-c_1|\xi|^2t}\,|\xi|^{2\beta}g_0(|\xi|t)\right\|_{L^2}^{1-\frac{n}{2N}}\left(\sum_{|\alpha|=N}\left\| \partial_\xi^{\alpha}\left(\chi_1(|\xi|)\,\mathrm{e}^{-c_1|\xi|^2t}\,|\xi|^{2\beta}g_0(|\xi|t)\right)\right\|_{L^2}\right)^{\frac{n}{2N}}
	\end{align*}
	for $\frac{n}{2}<N\in\mb{N}$ as well as $n\geqslant 1$. From the boundedness of $g_0(|\xi|t)$ and the polar coordinates, one may estimate
	\begin{align*}
		\left\| \chi_1(|\xi|)\,\mathrm{e}^{-c_1|\xi|^2t}\,|\xi|^{2\beta}g_0(|\xi|t)\right\|_{L^2}^2\lesssim\int_0^{2\epsilon_0}\mathrm{e}^{-2c_1r^2t}\,r^{4\beta+n-1}\,\mathrm{d}r\lesssim t^{-\frac{n}{2}-2\beta}.
	\end{align*}
For another, a direct computation follows 
	\begin{align*}
		\left|\partial_\xi^{\alpha}\left(\chi_1(|\xi|)\,\mathrm{e}^{-c_1|\xi|^2t}\,|\xi|^{2\beta}g_0(|\xi|t)\right)\right|\lesssim\chi_1(|\xi|)\,\mathrm{e}^{-c_1|\xi|^2t}\,|\xi|^{2\beta}\times\begin{cases}
		t^{|\alpha|}&\mbox{if}\ \beta=0,\\
		t^{|\alpha|}+|\xi|^{-|\alpha|}&\mbox{if}\ \beta>\frac{1}{2}.
		\end{cases}  
	\end{align*}
	It implies 
	\begin{align*}
		\sum_{|\alpha|=N}\left\| \partial_\xi^{\alpha}\left(\chi_1(|\xi|)\,\mathrm{e}^{-c_1|\xi|^2t}\,|\xi|^{2\beta}g_0(|\xi|t)\right)\right\|_{L^2}^2&\lesssim t^{2N-\frac{n}{2}-2\beta}+\int_0^{2\epsilon_0}\mathrm{e}^{-2c_1r^2t}\,r^{4\beta-2N+n-1}\,\mathrm{d}r\\
		&\lesssim t^{2N-\frac{n}{2}-2\beta}+t^{N-\frac{n}{2}-2\beta}\lesssim t^{2N-\frac{n}{2}-2\beta},
	\end{align*}
via the condition $4\beta-2N+n>0$ if $\beta>\frac{1}{2}$ by choosing $N=\frac{n+1}{2}$ for odd $n$ and $N=\frac{n+2}{2}$ for even $n$ (this is the reason for our restriction on $\beta$), namely,
\begin{align*}
\left\| \ml{F}^{-1}_{\xi\to x}\left(\chi_1(|\xi|)\,\mathrm{e}^{-c_1|\xi|^2t}\,|\xi|^{2\beta}g_0(|\xi|t)\right)\right\|_{L^1}\lesssim t^{\frac{n}{4}-\beta}.
\end{align*}
	Combining the last estimates if $\beta\in\{0\}\cup(\frac{1}{2},+\infty)$ and \cite[Lemma 3.2 with $\delta=\sigma=1$]{Dao-Reissig=2019} if $\beta\in(0,\frac{1}{2}]$, we immediately conclude our desired second estimate.
\end{proof}

Moreover, by following the approach in \cite{Narazaki-Reissig=2013}, we may derive the next sharp estimate to deal with the Fourier multipliers related to $\lambda_1$.
\begin{lemma}\label{lem-oscillating-e}
	Let $n\geqslant 1$ and $\beta\geqslant 0$. The following $L^1$ estimate holds:
	\begin{align*}
		\|I_0(t,\cdot)\|_{L^1}:=\left\| \ml{F}^{-1}_{\xi\to x}\left(\chi_1(|\xi|)\,\mathrm{e}^{-c_1t+c_2|\xi|^2t}\,|\xi|^{2\beta}\right)\right\|_{L^1}\lesssim \mathrm{e}^{-\frac{c_1}{2}t},
	\end{align*}
	with $c_1>0$ and $c_2\in \mathbb{R}$.
\end{lemma}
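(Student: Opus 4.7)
The strategy is to factor the uniform exponential decay out of the inverse Fourier transform,
\[
I_0(t,x) = \mathrm{e}^{-c_1 t}\,\mathcal{F}^{-1}_{\xi\to x}\!\bigl(\chi_1(|\xi|)\,\mathrm{e}^{c_2|\xi|^2 t}\,|\xi|^{2\beta}\bigr)(x),
\]
and to show that the $L^1$ norm of the remaining compactly supported kernel grows at most like $(1+t)^{\kappa}\,\mathrm{e}^{[c_2]_+(2\epsilon_0)^2 t}$ for some $\kappa>0$. Since $\epsilon_0$ was introduced only as a small free parameter, one may shrink it so that $[c_2]_+(2\epsilon_0)^2 \le c_1/4$ (no restriction is needed when $c_2\le 0$); this absorbs both the polynomial factor and the growth exponential into the gap between $\mathrm{e}^{-c_1 t}$ and $\mathrm{e}^{-c_1 t/2}$.

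For $t\in[0,1]$ the estimate is immediate, since the multiplier is smooth and compactly supported uniformly in $t$, making the inverse Fourier transform a Schwartz function with $L^1$ norm bounded by a constant. For $t\ge 1$, I would follow the same Bernstein-theorem strategy employed in the proof of Lemma~\ref{lem-oscillating-sincos}: with $f(\xi):=\chi_1(|\xi|)\,\mathrm{e}^{c_2|\xi|^2 t}\,|\xi|^{2\beta}$ and a fixed integer $N>n/2$,
\[
\|\mathcal{F}^{-1}(f)\|_{L^1}\lesssim\|f\|_{L^2}^{1-\frac{n}{2N}}\Bigl(\sum_{|\alpha|=N}\|\partial_\xi^\alpha f\|_{L^2}\Bigr)^{\!\frac{n}{2N}}.
\]
Polar coordinates and the bound $|\xi|\le 2\epsilon_0$ on the support of $\chi_1$ give $\|f\|_{L^2}\lesssim\mathrm{e}^{[c_2]_+(2\epsilon_0)^2 t}$; each $\xi$-derivative of $\mathrm{e}^{c_2|\xi|^2 t}$ contributes at most an $O(1+t)$ factor on this support, while derivatives of $\chi_1$ are bounded, and derivatives of $|\xi|^{2\beta}$ produce (at worst) $|\xi|^{2\beta-N}$, locally $L^2$-integrable provided $N<2\beta+n/2$. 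Interpolating the two $L^2$ bounds through Bernstein then yields
\[
\bigl\|\mathcal{F}^{-1}\bigl(\chi_1(|\xi|)\,\mathrm{e}^{c_2|\xi|^2 t}\,|\xi|^{2\beta}\bigr)\bigr\|_{L^1}\lesssim (1+t)^{n/2}\,\mathrm{e}^{[c_2]_+(2\epsilon_0)^2 t},
\]
and multiplying by $\mathrm{e}^{-c_1 t}$ together with the smallness of $\epsilon_0$ delivers the claimed bound $\|I_0(t,\cdot)\|_{L^1}\lesssim \mathrm{e}^{-c_1 t/2}$.

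The main obstacle I anticipate is the same mild technicality already present in Lemma~\ref{lem-oscillating-sincos}: for small non-integer $\beta$ (for instance $\beta\in(0,1/2]$ with $n$ even) the two conditions $N>n/2$ and $N<2\beta+n/2$ cannot be simultaneously satisfied by an integer. In this regime the $|\xi|^{2\beta}$ singularity must be handled differently, either by invoking the modified Bessel function representation of \cite[Lemma~3.2]{Dao-Reissig=2019}, or by first decomposing $|\xi|^{2\beta}$ into dyadic frequency shells to avoid differentiating the singularity too many times. Either route produces at most an additional polynomial growth factor, which is again absorbed by the exponential gap and leaves the final estimate $\mathrm{e}^{-c_1 t/2}$ intact.
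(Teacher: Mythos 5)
Your overall architecture (factor out $\mathrm{e}^{-c_1t}$, tolerate a growth factor $(1+t)^{\kappa}\mathrm{e}^{[c_2]_+(2\epsilon_0)^2t}$, and absorb it into the gap between $\mathrm{e}^{-c_1t}$ and $\mathrm{e}^{-c_1t/2}$ by shrinking $\epsilon_0$) is legitimate, and for $t\geqslant 1$ your Bernstein route is a genuinely different path from the paper, which instead performs the parabolic rescaling $\xi=t^{-1/2}\eta$, $y=t^{-1/2}x$ and quotes the kernel estimate of \cite[Proposition 4]{Narazaki-Reissig=2013}. However, there is a genuine error in your treatment of $t\in[0,1]$. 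The multiplier $\chi_1(|\xi|)\,\mathrm{e}^{c_2|\xi|^2t}\,|\xi|^{2\beta}$ is \emph{not} smooth at $\xi=0$ unless $2\beta$ is an even integer, so its inverse Fourier transform is not a Schwartz function and the claim that the $L^1$ bound is ``immediate'' does not stand. The $L^1$ integrability for short times is precisely where the paper invests its main technical effort: for $|x|\geqslant 1$, $t\in[0,1]$ it writes $I_0$ through the modified Bessel function $\widetilde{\ml{J}}_{\frac{n}{2}-1}$, performs $k+1$ integrations by parts with the vector field $\ml{X}$, and extracts the decay $|x|^{-(n+1)}$ (with a separate splitting at $r=|x|^{-1}$ for the borderline integrals) to conclude integrability at infinity. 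You do flag the singularity of $|\xi|^{2\beta}$ later, but only as an obstruction to the Bernstein step for $t\geqslant 1$ and small $\beta$; the same singularity already invalidates your $t\leqslant 1$ argument for \emph{every} $\beta$ with $2\beta\notin 2\mb{N}_0$, and this part needs a real proof (the Bessel/integration-by-parts argument, or a dyadic decomposition of $|\xi|^{2\beta}$, or the known decay $|x|^{-n-2\beta}$ of Fourier transforms of truncated homogeneous functions).

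For $t\geqslant 1$ your computation is essentially correct: $\|f\|_{L^2}\lesssim \mathrm{e}^{[c_2]_+(2\epsilon_0)^2t}$, each derivative costs at most a factor $t$ on the compact support, and Bernstein interpolation yields $(1+t)^{n/2}\mathrm{e}^{[c_2]_+(2\epsilon_0)^2t}$, which is absorbed. The residual restriction $N<2\beta+\frac{n}{2}$ versus $N>\frac{n}{2}$ is real for small positive $\beta$, and your proposed fixes (dyadic shells or the Bessel representation) would close it; note though that the paper's rescaling argument avoids this restriction entirely and needs no case distinction in $\beta$, which is what the extra structure of Case 3 buys. Also be aware that shrinking $\epsilon_0$ couples the cutoff to the constants $c_1,c_2$; this is harmless here because only finitely many fixed pairs $(c_1,c_2)=(\frac{1}{\tau},\delta)$ etc.\ occur in the applications, but it should be stated, since $\chi_1$ is fixed once for the whole of Section \ref{Section_MGT}.
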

\begin{proof}
	We next divide our proof into three different cases (cf. \cite[Proposition 4]{Narazaki-Reissig=2013} or \cite[Lemma 3.1]{Dao-Reissig=2019}) with respect to the size of $|x|$ and $t$.
	\par\noindent\textbf{Case 1: $|x|\leqslant 1$ and $t\in [0,1]$.}
	Due to the boundedness of $|x|$ and $t$, the desired estimates obviously hold, to be specific,
	\begin{align*}
	\|I_0(t,\cdot)\|_{L^1(|x|\leqslant 1)}\lesssim\int_{|x|\leqslant 1}\left|\int_{|\xi|\leqslant 2\epsilon_0}\mathrm{e}^{ix\cdot\xi}\,\mathrm{e}^{-c_1t+c_2|\xi|^2t}\,|\xi|^{2\beta}\,\mathrm{d}\xi \right|\mathrm{d}x\lesssim 1.
	\end{align*}
\par\noindent\textbf{Case 2: $|x|\geqslant 1$ and $t\in [0,1]$.}
Let us represent our target via
\begin{align*}
	I_0(t,x)=c\int_0^{+\infty}\chi_1(r)\, \mathrm{e}^{-c_1t+c_2r^2t}\, r^{2\beta+n-1}\widetilde{\ml{J}}_{\frac{n}{2}-1}(r|x|) \, \mathrm{d}r,
\end{align*}
in which we used the modified Bessel functions (cf. Lemma \ref{Modified-Bessel-Funct}) thanks to the radially symmetric of the Fourier multiplier with respect to $|\xi|$. By introducing the vector field $\ml{X}f(r):=\frac{\mathrm{d}}{\mathrm{d}r}(\frac{1}{r}f(r))$, we then carry out $k+1$ steps of integration by parts to obtain 
\begin{align}\label{eq:I-expression-after-partial}
	I_0(t,x)=-\frac{c}{|x|^n}\int_0^{+\infty} \partial_r\left(\ml{X}^k\left(\chi_1(r)\,\mathrm{e}^{-c_1t+c_2r^2t}\, r^{2\beta+2k}\right)\right)\sin(r|x|)\, \mathrm{d}r
\end{align}
with the value $\widetilde{\ml{J}}_{\frac{1}{2}}(s)=-\frac{1}{s}\mathrm{d}_s\widetilde{\ml{J}}_{-\frac{1}{2}}(s)=\sqrt{\frac{2}{\pi}}\frac{\sin s}{s}$ for odd spatial dimensions $n=2k+1$ with $k\geqslant1$. A standard calculation by applying the Leibniz formula leads to 
\begin{align*}
	I_0(t,x)&=\sum_{j=0}^{k}\sum_{\ell=0}^{j+1}\frac{c_{j\ell}}{|x|^n}\int_0^{+\infty}\chi_1^{(\ell)}(r)\, \partial_r^{j+1-\ell}(\mathrm{e}^{-c_1t+c_2r^2t})\, r^{2\beta+j}\sin(r|x|)\, \mathrm{d}r\\ &\quad+\sum_{j=0}^{k}\sum_{\ell=0}^{j}\frac{c_{j\ell}}{|x|^n}\int_0^{+\infty} \chi_1^{(\ell+1)}(r)\,\partial_r^{j-\ell}(\mathrm{e}^{-c_1t+c_2r^2t})\, r^{2\beta+j}\sin(r|x|)\, \mathrm{d}r\\ &\quad +\sum_{j=1}^{k}\sum_{\ell=0}^{j}\frac{c_{j\ell}}{|x|^n}\int_0^{+\infty}\chi_1^{(\ell)}(r)\, \partial_r^{j-\ell}(\mathrm{e}^{-c_1t+c_2r^2t})\, r^{2\beta+j-1}\sin(r|x|)\, \mathrm{d}r
\end{align*}
with some universal constants $c_{j\ell}$. 	Using for $\ell\geqslant 1$ that 
\begin{align*}
	\left|\chi_1^{(\ell)}(r)\,\partial_r^l(\mathrm{e}^{-c_1t+c_2r^2t})\right|\lesssim 1 \ \mbox{for all}\  l\geqslant 0 \ \mbox{and}  \ t\in[0,1]
\end{align*}
on the support of derivatives of $\chi_1(r)$ which is away from $r=0$, one more step of integration by parts yields the upper bound $|x|^{-(n+1)}$ for all integrals with $\ell\geqslant 1$. It remains to study for $j=0,\dots,k$ the integrals
\begin{align*}
	\int_0^{+\infty} \chi_1(r)\,\partial_r^{j+1}(\mathrm{e}^{-c_1t+c_2r^2t})\, r^{2\beta+j}\sin(r|x|)\, \mathrm{d}r.
\end{align*}
Due to the facts that 
\begin{align*}
	\left|\int_0^{|x|^{-1}}\chi_1(r)\,\partial_r^j(\mathrm{e}^{-c_1t+c_2r^2t})\, r^{2\beta+j-1}\sin(r|x|)\, \mathrm{d}r\right|\lesssim\int_0^{|x|^{-1}}r\,\mathrm{d}r\lesssim|x|^{-2}
\end{align*}
and, from an additional integration by parts,
\begin{align*}
	&\left|\int_{|x|^{-1}}^{+\infty}\chi_1(r)\,\partial_r^j(\mathrm{e}^{-c_1t+c_2r^2t})\, r^{2\beta+j-1}\sin(r|x|)\, \mathrm{d}r\right|\\ &\lesssim |x|^{-2}+|x|^{-1}\left|\int_{|x|^{-1}}^{+\infty}\chi_1(r)\,\partial_r\left(\partial_r^j(\mathrm{e}^{-c_1t+c_2r^2t})\, r^{2\beta+j-1}\right)\cos(r|x|)\, \mathrm{d}r\right|\\
	&\lesssim |x|^{-1},
\end{align*}
it immediately yields 
\begin{align*}
	\|I_0(t,\cdot)\|_{L^1(|x|\geqslant 1)}\lesssim \|\,|x|^{-(n+1)}\|_{L^1(|x|\geqslant 1)}\lesssim 1
\end{align*}
for any $t\in[0,1]$ to be our desired bounded estimate for odd spatial dimensions. For another, the deduction for even spatial dimensions $n=2k$ with $k\geqslant 1$ is similar to the above with some slight modifications. Particularly, $\widetilde{\ml{J}}_{\frac{1}{2}}(r|x|)$  in \eqref{eq:I-expression-after-partial} is replaced by $\widetilde{\ml{J}}_{0}(r|x|)$.\par
	\noindent\textbf{Case 3: $|x|\geqslant 0$ and $t\in [1,+\infty)$.}
	Via the changes of variables $\xi=t^{-\frac{1}{2}}\eta$ and $y=t^{-\frac{1}{2}}x$ we have 
    \begin{align*}
        \ml{F}^{-1}_{\xi\to x}\left(\chi_1(|\xi|)\,\mathrm{e}^{-c_1t+c_2|\xi|^2t}\,|\xi|^{2\beta}\right)=t^{-\frac{n}{2}-\beta}\,\mathrm{e}^{-\frac{c_1}{2}t}\,\ml{F}^{-1}_{\eta\to y}\left(\chi_1(t^{-\frac{1}{2}}|\eta|)\,\mathrm{e}^{-\frac{c_1}{2}t+c_2|\eta|^2}\,|\eta|^{2\beta}\right).
    \end{align*}
	Following the similar procedure to \cite[Proposition 4, particularly, Formula (58)]{Narazaki-Reissig=2013} by changing their exponential decay term into $\mathrm{e}^{-\frac{c_1}{2}t+c_2|\eta|^2}$ we are able to obtain
	\begin{align*}
		\left\|\ml{F}^{-1}_{\eta\to y}\left(\chi_1(t^{-\frac{1}{2}}|\eta|)\,\mathrm{e}^{-\frac{c_1}{2}t+c_2|\eta|^2}\,|\eta|^{2\beta}\right)\right\|_{L^1}\lesssim 1.
	\end{align*}
	Summing up all derived estimates in the above we complete the proof.
\end{proof}

	\subsection{$L^p-L^q$ estimates for small frequencies}\label{Sub-Section-Small}
	\hspace{5mm}To investigate $L^p-L^q$ estimates for the solutions localizing in the small frequency zone, we next provide some $L^1$ and $L^\infty$ estimates, respectively, for the kernels.
	\begin{prop}\label{prop:L^1-small}
		Let $n\geqslant 1$ and $s\geqslant 0$. Then, the kernels satisfy the following $L^1$ estimates:
		\begin{align*}
			\left\| \ml{F}^{-1}_{\xi\to x}\left(\chi_1(|\xi|)|\xi|^s\widehat{K}_0(t,|\xi|)\right)\right\|_{L^1}&\lesssim\begin{cases}
			(1+t)^{\frac{1}{2}(1+\lfloor\frac{n}{2}\rfloor)}&\mbox{if}\ s=0,\\
			(1+t)^{\frac{1}{2}(2+\lfloor\frac{n}{2}\rfloor)-\frac{s}{2}}&\mbox{if}\ s\in(0,1],\\
					(1+t)^{\frac{n}{4}-\frac{s}{2}}&\mbox{if}\ s\in(1,+\infty),
			\end{cases}\\
			\left\| \ml{F}^{-1}_{\xi\to x}\left(\chi_1(|\xi|)|\xi|^s\widehat{K}_1(t,|\xi|)\right)\right\|_{L^1}&\lesssim\begin{cases}
				(1+t)^{\frac{1}{2}(3+\lfloor\frac{n}{2}\rfloor)-\frac{s}{2}}&\mbox{if}\ s\in[0,1)\cup(1,2],\\
				(1+t)^{\frac{n}{4}+\frac{1}{2}-\frac{s}{2}}&\mbox{if}\ s\in\{1\}\cup(2,+\infty),
			\end{cases}\\
					\left\| \ml{F}^{-1}_{\xi\to x}\left(\chi_1(|\xi|)|\xi|^s\widehat{K}_2(t,|\xi|)\right)\right\|_{L^1}&\lesssim\begin{cases}
			(1+t)^{\frac{1}{2}(3+\lfloor\frac{n}{2}\rfloor)-\frac{s}{2}}&\mbox{if}\ s\in[0,1)\cup(1,2],\\
			(1+t)^{\frac{1}{2}(1+\lfloor\frac{n}{2}\rfloor)}&\mbox{if}\ s=1,\\			
			(1+t)^{\frac{n}{4}+\frac{1}{2}-\frac{s}{2}}&\mbox{if}\ s\in(2,+\infty).
		\end{cases}
		\end{align*}
 Furthermore, by subtracting the corresponding profiles of these kernels, the error terms satisfy the following refined $L^1$ estimates:
		\begin{align*}
			\left\| \ml{F}^{-1}_{\xi \to x}\left(\chi_1(|\xi|)|\xi|^{s}\left(\widehat{K}_1(t,|\xi|)-\widehat{J}(t,|\xi|)\right)\right)\right\|_{L^1}&\lesssim \begin{cases}
				(1+t)^{\frac{1}{2}(1+\lfloor\frac{n}{2}\rfloor)}&\mbox{if}\ s=0,\\
				(1+t)^{\frac{n}{4}-\frac{1}{2}-\frac{s}{2}}&\mbox{if}\ s\in(0,+\infty),
			\end{cases}\\
			\left\| \ml{F}^{-1}_{\xi \to x}\left(\chi_1(|\xi|)|\xi|^{s}\left(\widehat{K}_2(t,|\xi|)-\tau\widehat{J}(t,|\xi|)\right)\right)\right\|_{L^1}&\lesssim\begin{cases}
				(1+t)^{\frac{1}{2}(1+\lfloor\frac{n}{2}\rfloor)}&\mbox{if}\ s=0,\\
(1+t)^{\frac{1}{2}(2+\lfloor\frac{n}{2}\rfloor)-\frac{s}{2}}&\mbox{if}\ s\in(0,1],\\
	(1+t)^{\frac{n}{4}-\frac{s}{2}}&\mbox{if}\ s\in(1,+\infty),
			\end{cases}
		\end{align*}
		where the singular diffusion waves kernel is defined via
		\begin{align*}
		\widehat{J}(t,|\xi|):=\frac{\sin(|\xi|t)}{|\xi|}\,\mathrm{e}^{-\frac{\delta}{2}|\xi|^2t}.
		\end{align*}
	\end{prop}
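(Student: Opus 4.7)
The plan is to estimate each kernel $\widehat{K}_\ell$ via the decomposition $\widehat{K}_\ell = \widehat{K}_\ell^1 + \widehat{K}_\ell^{\cos} + \widehat{K}_\ell^{\sin}$ from \eqref{widehat-K-expression} and treat the three sub-kernels separately. The pieces $\widehat{K}_\ell^1$ carry the factor $e^{\lambda_1 t}$ with $\lambda_1 = -1/\tau + O(|\xi|^2)$; after expanding the amplitudes $-(\mu_{\mathrm{I}}^2+\mu_{\mathrm{R}}^2)/\Lambda_0,\, 2\mu_{\mathrm{R}}/\Lambda_0,\, -1/\Lambda_0$ (whose leading orders in $|\xi|$ are $|\xi|^2,|\xi|^2,1$ for $\ell=0,1,2$ respectively) and applying Lemma \ref{lem-oscillating-e} with a matching $\beta$, one obtains an $e^{-ct}$ bound which is absorbed by every polynomial-growth rate stated in the proposition.

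The substantive work concerns the oscillatory pieces $\widehat{K}_\ell^{\cos}$ and $\widehat{K}_\ell^{\sin}$. I would first Taylor-expand each amplitude via the small-frequency expansions in Section 2.1 together with $\Lambda_0 = -1/\tau^2 + O(|\xi|^2)$. A direct calculation produces the following leading orders in $|\xi|$ for the pair $(\widehat{K}_\ell^{\cos}, \widehat{K}_\ell^{\sin})$: $(1, |\xi|)$ for $\ell = 0$, $(|\xi|^2, 1/|\xi|)$ for $\ell = 1$, and $(1, 1/|\xi|)$ for $\ell = 2$. Multiplying by $|\xi|^s$ and choosing the matching exponent $\beta$ in Lemma \ref{lem-oscillating-sincos} then reads off the claimed rates; in particular, the piecewise case-splits in the statement are dictated precisely by the threshold $\beta = 1/2$ in the second estimate of that lemma together with the $\sin$-over-$|\xi|$ improvement in its first estimate. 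Taking the dominant contribution over $\ell$ and over the $\cos/\sin$ components yields the stated bound in each range of $s$.

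The main technical obstacle is that Lemma \ref{lem-oscillating-sincos} is formulated for the clean factors $e^{-c_1|\xi|^2 t}$ and $g_0(c_2|\xi|t)$, whereas our kernels contain $e^{\mu_{\mathrm{R}}(|\xi|)t}$ and $g_0(\mu_{\mathrm{I}}(|\xi|)t)$. I would resolve this by re-running the Bernstein-theorem argument of Lemma \ref{lem-oscillating-sincos} directly with these generalized factors: on the support of $\chi_1$ (after shrinking $\epsilon_0$ if necessary) one has $\mu_{\mathrm{R}}(|\xi|) \leqslant -c|\xi|^2$, and the derivative bounds on $e^{\mu_{\mathrm{R}}(|\xi|)t}$ and $g_0(\mu_{\mathrm{I}}(|\xi|)t)$ reproduce exactly the same polynomial dependence on $(|\xi|, t)$ as in the clean case, because $\mu_{\mathrm{R}}$ and $\mu_{\mathrm{I}}$ are smooth functions with the same vanishing orders at $|\xi| = 0$ as $c_1|\xi|^2$ and $c_2|\xi|$. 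Consequently the same $L^2$ and $L^2$-derivative estimates transfer, and the Bernstein interpolation delivers identical $L^1$ bounds.

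For the refined error estimates, I would subtract the profile and split $\widehat{K}_\ell - c_\ell\widehat{J}$ (with $c_1 = 1$ and $c_2 = \tau$) into the $\widehat{K}_\ell^1$ part (exponentially small and hence negligible), the $\widehat{K}_\ell^{\cos}$ part (already controlled by its $O(|\xi|^2)$ leading amplitude), and the main error $\widehat{K}_\ell^{\sin} - c_\ell\widehat{J}$. Writing $\widehat{K}_\ell^{\sin} = h_\ell(|\xi|)\sin(\mu_{\mathrm{I}} t)e^{\mu_{\mathrm{R}} t}$, the Taylor expansion $h_\ell(|\xi|) - c_\ell/|\xi| = O(|\xi|)$ gives an amplitude-mismatch term that already satisfies the advertised rate via Lemma \ref{lem-oscillating-sincos}. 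The residual phase/exponential mismatch $c_\ell |\xi|^{-1}\bigl(\sin(\mu_{\mathrm{I}} t)e^{\mu_{\mathrm{R}} t} - \sin(|\xi|t)e^{-\delta|\xi|^2 t/2}\bigr)$ is handled using $\mu_{\mathrm{I}} - |\xi| = O(|\xi|^3)$ and $\mu_{\mathrm{R}} + \delta|\xi|^2/2 = O(|\xi|^4)$: any polynomial $t$-weights produced by these differences in the Bernstein argument are absorbed by the Gaussian weight $e^{-c|\xi|^2 t}$, yielding bounds no worse than the amplitude-mismatch contribution.
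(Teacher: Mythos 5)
Your overall strategy coincides with the paper's: decompose each kernel via \eqref{widehat-K-expression}, dispose of the $\widehat{K}_\ell^1$ pieces through Lemma \ref{lem-oscillating-e}, read off the leading amplitudes of the $\cos/\sin$ pieces (your table of leading orders is correct), and subtract the profile for the refined bounds. The point of divergence is how you reduce the factors $\mathrm{e}^{\mu_{\mathrm{R}}t}$ and $g_0(\mu_{\mathrm{I}}t)$ to the clean factors of Lemma \ref{lem-oscillating-sincos}, and here your proposed fix has a genuine gap. You suggest re-running only the \emph{Bernstein} argument with the perturbed phase and exponent. But the Bernstein computation is the proof of only part of Lemma \ref{lem-oscillating-sincos} (the second estimate for $\beta\in\{0\}\cup(\frac{1}{2},+\infty)$); the first estimate — the one for $\mathrm{e}^{-c_1|\xi|^2t}|\xi|^{2\beta}\sin(c_2|\xi|t)/(c_2|\xi|)$, which is exactly what you need for the dominant $|\xi|^{-1}$-amplitude terms of $\widehat{K}_1^{\sin}$ and $\widehat{K}_2^{\sin}$ that produce the main rates $(1+t)^{\frac{1}{2}(3+\lfloor\frac{n}{2}\rfloor)-\frac{s}{2}}$ — is \emph{not} a Bernstein estimate: for small $\beta$ the $N$-fold $\xi$-derivatives of $|\xi|^{2\beta-1}$ with $N>\frac{n}{2}$ are not square-integrable near the origin, so the interpolation of Lemma \ref{Bernstein-Theorem} simply does not apply. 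The same remark holds for the $\beta\in(0,\frac{1}{2}]$ regime of the cosine terms, which governs the $s\in(0,1]$ cases in the statement. Both of these rely on the modified-Bessel-function/integration-by-parts analysis cited from Narazaki--Reissig and Dao--Reissig, carried out for a \emph{linear} phase $c_2|\xi|t$; transferring that analysis to the curved phase $\mu_{\mathrm{I}}(|\xi|)t$ is not a matter of "the same derivative bounds" and would have to be redone from scratch. The paper avoids this entirely by Taylor-expanding $g_0(\mu_{\mathrm{I}}t)\,\mathrm{e}^{\mu_{\mathrm{R}}t}$ about the clean phase $|\xi|t$ and exponent $-\frac{\delta}{2}|\xi|^2t$: every correction term carries at least $|\xi|^3t$ or $|\xi|^4t$, so the singular leading term appears exactly in the form covered by the existing first estimate, and the corrections are nonsingular clean terms with the $t$-weights absorbed by the Gaussian. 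You should adopt that reduction rather than a generalized Bernstein run.

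A second, smaller inaccuracy: in the refined estimates you assert that every $\widehat{K}_\ell^{\cos}$ is "controlled by its $O(|\xi|^2)$ leading amplitude". This is true for $\ell=1$ but false for $\ell=2$, whose cosine part has leading amplitude $-\tau^2=O(1)$; this is precisely why the stated refined rate for $\widehat{K}_2-\tau\widehat{J}$ is weaker than the one for $\widehat{K}_1-\widehat{J}$ when $s>0$. As written, your argument would yield a rate for $\ell=2$ that is better than (and inconsistent with) the proposition.
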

\begin{remark}\label{Rem-01}
For the sake of convenient, a direct computation implies that the totally estimates are determined by
\begin{align*}
\sum\limits_{\ell=0,1,2}\left\| \ml{F}^{-1}_{\xi\to x}\left(\chi_1(|\xi|)|\xi|^s\widehat{K}_{\ell}(t,|\xi|)\right)\right\|_{L^1}\lesssim\begin{cases}
	(1+t)^{\frac{1}{2}(3+\lfloor\frac{n}{2}\rfloor)-\frac{s}{2}}&\mbox{if}\ s\in[0,1)\cup(1,2],\\
	(1+t)^{\frac{1}{2}(1+\lfloor\frac{n}{2}\rfloor)}&\mbox{if}\ s=1,\\			
	(1+t)^{\frac{n}{4}+\frac{1}{2}-\frac{s}{2}}&\mbox{if}\ s\in(2,+\infty),
\end{cases}
\end{align*}
moreover,
\begin{align*}
&\sum\limits_{\ell=1,2}\left\| \ml{F}^{-1}_{\xi\to x}\left(\chi_1(|\xi|)|\xi|^s\big(\widehat{K}_{\ell}(t,|\xi|)-[2-\ell+(\ell-1)\tau]\widehat{J}(t,|\xi|)\big)\right)\right\|_{L^1}\\
&\lesssim\begin{cases}
	(1+t)^{\frac{1}{2}(1+\lfloor\frac{n}{2}\rfloor)}&\mbox{if}\ s=0,\\
	(1+t)^{\frac{1}{2}(2+\lfloor\frac{n}{2}\rfloor)-\frac{s}{2}}&\mbox{if}\ s\in(0,1],\\
	(1+t)^{\frac{n}{4}-\frac{s}{2}}&\mbox{if}\ s\in(1,+\infty).
\end{cases}
\end{align*}
\end{remark}
\begin{proof}
	With the aim of deriving our desired estimates, we will apply Lemmas \ref{lem-oscillating-sincos} and \ref{lem-oscillating-e} for all elements in asymptotic expansions of kernels in \eqref{widehat-K-expression}. We first rewrite the kernel as
	\begin{align*}
		\widehat{K}_0^1&=\frac{-(|\xi|^2+\tau\delta|\xi|^4+O(|\xi|^6))\, \mathrm{e}^{-\frac{1}{\tau}t+\delta|\xi|^2t+O(|\xi|^4)t}}{-\frac{1}{\tau^2}-|\xi|^2+\frac{3\delta}{\tau}|\xi|^2+O(|\xi|^4)}\\
		&=\tau^2|\xi|^2\,\mathrm{e}^{-\frac{1}{\tau}t+\delta|\xi|^2t}+\tau^3(4\delta-\tau)|\xi|^4\,\mathrm{e}^{-\frac{1}{\tau}t+\delta|\xi|^2t}+O(|\xi|^6)\, \mathrm{e}^{-\frac{1}{\tau}t+\delta|\xi|^2t}.
	\end{align*}
Then, by applying Lemma \ref{lem-oscillating-e} with suitable $\beta=\beta(s)$ and $c_1=\frac{1}{\tau}$, $c_2=\delta$, one obtains 
\begin{align*}
	\left\| \ml{F}^{-1}_{\xi \to x}\left(\chi_1(|\xi|)|\xi|^s\widehat{K}_0^1(t,|\xi|)\right)\right\|_{L^1}\lesssim \mathrm{e}^{-\frac{1}{2\tau}t}.
\end{align*}
According to asymptotic expansions for small frequencies, analogously,
\begin{align}\label{eq:K_1^1-expression}
	\widehat{K}_1^1&=\tau^2\delta|\xi|^2\,\mathrm{e}^{-\frac{1}{\tau}t+\delta|\xi|^2t}+2\tau^3\delta(2\delta-\tau)|\xi|^4\,\mathrm{e}^{-\frac{1}{\tau}t+\delta|\xi|^2t}+O(|\xi|^6)\, \mathrm{e}^{-\frac{1}{\tau}t+\delta|\xi|^2t},\\ \notag \widehat{K}_2^1&=\tau^2\,\mathrm{e}^{-\frac{1}{\tau}t+\delta|\xi|^2t}+\tau^3(3\delta-\tau)|\xi|^2\,\mathrm{e}^{-\frac{1}{\tau}t+\delta|\xi|^2t}+O(|\xi|^4)\, \mathrm{e}^{-\frac{1}{\tau}t+\delta|\xi|^2t},
\end{align}
we may estimate for $\ell=1,2$ that
\begin{align*}
	\left\| \ml{F}^{-1}_{\xi \to x}\left(\chi_1(|\xi|)|\xi|^s\widehat{K}_\ell^1(t,|\xi|)\right)\right\|_{L^1}\lesssim \mathrm{e}^{-\frac{1}{2\tau}t}.
\end{align*}
Considering the expansion
\begin{align*}
	\widehat{K}_0^{\cos}&= \left(1-\tau^2|\xi|^2+O(|\xi|^4)\right)\mathrm{e}^{-\frac{\delta}{2}|\xi|^2t}\, \sum_{k=0}^{+\infty}\frac{(O(|\xi|^4)t)^k}{k!}\, \sum_{l=0}^{+\infty}\frac{\cos^{(l)}(|\xi|t)}{l!}\left(\frac{\delta(4\tau-\delta)}{8}|\xi|^3t\right)^l\\ 
	&=\cos(|\xi|t)\, \mathrm{e}^{-\frac{\delta}{2}|\xi|^2t}-\tau^2|\xi|^2\cos(|\xi|t)\, \mathrm{e}^{-\frac{\delta}{2}|\xi|^2t}-\frac{\delta(4\tau-\delta)}{8}\sin(|\xi|t)|\xi|^3t\, \mathrm{e}^{-\frac{\delta}{2}|\xi|^2t}\\ &\quad-\frac{\tau\delta(\delta-\tau)}{2}|\xi|^4t\cos(|\xi|t)\, \mathrm{e}^{-\frac{\delta}{2}|\xi|^2t}+O(|\xi|^5)t\big(|\xi|\cos(|\xi|t)+\sin(|\xi|t)\big)\,\mathrm{e}^{-\frac{\delta}{2}|\xi|^2t},
\end{align*}
associated with the second estimate in Lemma \ref{lem-oscillating-sincos}, we conclude 
\begin{align*}
	\left\| \ml{F}^{-1}_{\xi \to x}\left(\chi_1(|\xi|)|\xi|^s\widehat{K}_0^{\cos}(t,|\xi|)\right)\right\|_{L^1}\lesssim\begin{cases}
		(1+t)^{\frac{n}{4}-\frac{s}{2}}&\mbox{if}\ s\in\{0\}\cup(1,+\infty),\\
		(1+t)^{\frac{1}{2}(2+\lfloor\frac{n}{2}\rfloor)-\frac{s}{2}}&\mbox{if}\ s\in(0,1].
	\end{cases}
\end{align*}
By the same way, the next asymptotic expansions hold:
\begin{align}\label{eq:K_1^cos-expression}
	\notag \widehat{K}_1^{\cos}&=-\tau^2\delta|\xi|^2\cos(|\xi|t)\, \mathrm{e}^{-\frac{\delta}{2}|\xi|^2t}-2\tau^3\delta(2\delta-\tau)|\xi|^4\cos(|\xi|t)\, \mathrm{e}^{-\frac{\delta}{2}|\xi|^2t}+\frac{\tau^3\delta^2(\delta-\tau)}{2}|\xi|^6t\cos(|\xi|t)\, \mathrm{e}^{-\frac{\delta}{2}|\xi|^2t}\\ 
	&\quad +\frac{\tau^2\delta^2(4\tau-\delta)}{8}\sin(|\xi|t)|\xi|^5t\, \mathrm{e}^{-\frac{\delta}{2}|\xi|^2t}+O(|\xi|^7)t\big(|\xi|\cos(|\xi|t)+\sin(|\xi|t)\big)\,\mathrm{e}^{-\frac{\delta}{2}|\xi|^2t},\\ \notag
	\widehat{K}_2^{\cos}&=-\tau^2\cos(|\xi|t)\, \mathrm{e}^{-\frac{\delta}{2}|\xi|^2t}-\tau^3(3\delta-\tau)|\xi|^2\cos(|\xi|t)\, \mathrm{e}^{-\frac{\delta}{2}|\xi|^2t}+\frac{\tau^3\delta(\delta-\tau)}{2}|\xi|^4t\cos(|\xi|t)\, \mathrm{e}^{-\frac{\delta}{2}|\xi|^2t}\\ &\notag\quad+\frac{\tau^2\delta(4\tau-\delta)}{8}\sin(|\xi|t)|\xi|^3t\, \mathrm{e}^{-\frac{\delta}{2}|\xi|^2t}+O(|\xi|^5)t\big(|\xi|\cos(|\xi|t)+\sin(|\xi|t)\big)\,\mathrm{e}^{-\frac{\delta}{2}|\xi|^2t},
\end{align}
which imply
\begin{align*}
	\left\| \ml{F}^{-1}_{\xi \to x}\left(\chi_1(|\xi|)|\xi|^s\widehat{K}_1^{\cos}(t,|\xi|)\right)\right\|_{L^1}&\lesssim (1+t)^{\frac{n}{4}-1-\frac{s}{2}},\\
		\left\| \ml{F}^{-1}_{\xi \to x}\left(\chi_1(|\xi|)|\xi|^s\widehat{K}_2^{\cos}(t,|\xi|)\right)\right\|_{L^1}&\lesssim\begin{cases}
		(1+t)^{\frac{n}{4}-\frac{s}{2}}&\mbox{if}\ s\in\{0\}\cup(1,+\infty),\\
		(1+t)^{\frac{1}{2}(2+\lfloor\frac{n}{2}\rfloor)-\frac{s}{2}}&\mbox{if}\ s\in(0,1].
	\end{cases}
\end{align*}
Furthermore, with the aid of
\begin{align*}
	\widehat{K}_0^{\sin}&=\left(\frac{\delta+2\tau}{2}|\xi|+\frac{\delta^3-16\tau^3+6\tau\delta^2+24\tau^2\delta}{16}|\xi|^3+O(|\xi|^5)\right) \mathrm{e}^{-\frac{\delta}{2}|\xi|^2t}\, \sum_{k= 0}^{+\infty}\frac{(O(|\xi|^4)t)^k}{k!}\\
	&\quad\times\sum_{l=0}^{+\infty}\frac{\sin^{(l)}(|\xi|t)}{l!}\left(\frac{\delta(4\tau-\delta)}{8}|\xi|^3t\right)^l\\
	&=\frac{\delta+2\tau}{2}|\xi|\sin(|\xi|t)\, \mathrm{e}^{-\frac{\delta}{2}|\xi|^2t}+O(|\xi|^3)t\big(|\xi|\cos(|\xi|t)+\sin(|\xi|t)\big)\,\mathrm{e}^{-\frac{\delta}{2}|\xi|^2t},
\end{align*}
it yields
\begin{align*}
	\left\| \ml{F}^{-1}_{\xi \to x}\left(\chi_1(|\xi|)|\xi|^s\widehat{K}_0^{\sin}(t,|\xi|)\right)\right\|_{L^1}\lesssim
	\begin{cases}
	(1+t)^{\frac{1}{2}(1+\lfloor\frac{n}{2}\rfloor)}&\mbox{if}\ s=0,\\
	(1+t)^{\frac{n}{4}-\frac{1}{2}-\frac{s}{2}}&\mbox{if}\ s\in(0,+\infty).
	\end{cases}
\end{align*}
For the other terms, due to the expansions that 
\begin{align}
	\notag \widehat{K}_1^{\sin}&=\frac{\sin(|\xi|t)}{|\xi|}\,\mathrm{e}^{-\frac{\delta}{2}|\xi|^2t}+\frac{\delta^2+4\tau\delta}{8}|\xi|\sin(|\xi|t)\, \mathrm{e}^{-\frac{\delta}{2}|\xi|^2t}-\frac{\tau\delta(\delta-\tau)}{2}|\xi|^3t\sin(|\xi|t)\, \mathrm{e}^{-\frac{\delta}{2}|\xi|^2t}\\ \label{eq:K_1^sin-expression}
	&\quad+\frac{\delta(4\tau-\delta)}{8}|\xi|^2t\cos(|\xi|t)\, \mathrm{e}^{-\frac{\delta}{2}|\xi|^2t}+O(|\xi|^4)t\big(\cos(|\xi|t)+|\xi|\sin(|\xi|t)\big)\,\mathrm{e}^{-\frac{\delta}{2}|\xi|^2t},\\ \notag
	\widehat{K}_2^{\sin}&=\tau\frac{\sin(|\xi|t)}{|\xi|}\,\mathrm{e}^{-\frac{\delta}{2}|\xi|^2t}+\frac{\delta^2+8\tau\delta-8\tau^2}{8}|\xi|\sin(|\xi|t)\, \mathrm{e}^{-\frac{\delta}{2}|\xi|^2t}-\frac{\tau^2\delta(\delta-\tau)}{2}|\xi|^3t\sin(|\xi|t)\, \mathrm{e}^{-\frac{\delta}{2}|\xi|^2t}\\ &\quad+\frac{\tau\delta(4\tau-\delta)}{8}|\xi|^2t\cos(|\xi|t)\, \mathrm{e}^{-\frac{\delta}{2}|\xi|^2t}+O(|\xi|^4)t\big(\cos(|\xi|t)+|\xi|\sin(|\xi|t)\big)\,\mathrm{e}^{-\frac{\delta}{2}|\xi|^2t},\notag
\end{align}
from Lemma \ref{lem-oscillating-sincos} with suitable $\beta=\beta(s)$, one directly claims
\begin{align*}
	\left\| \ml{F}^{-1}_{\xi \to x}\left(\chi_1(|\xi|)|\xi|^s\widehat{K}_\ell^{\sin}(t,|\xi|)\right)\right\|_{L^1}\lesssim
	\begin{cases}
		(1+t)^{\frac{1}{2}(2+\lfloor\frac{n}{2}\rfloor)+\frac{1}{2}-\frac{s}{2}} &\mbox{if} \  s\in[0,1)\cup  (1,2],\\
		(1+t)^{\frac{n}{4}+\frac{1}{2}-\frac{s}{2}}&\mbox{if} \ s\in\{1\}\cup(2,+\infty),
	\end{cases}
\end{align*}
for $\ell=1,2$. Furthermore, by subtracting the corresponding leading terms of sine kernels and using Lemma \ref{lem-oscillating-sincos} we obtain for $\ell=1,2$ that
\begin{align*}
	\left\| \ml{F}^{-1}_{\xi\to x}\left(\chi_1(|\xi|)|\xi|^s\big(\widehat{K}_\ell^{\sin}(t,|\xi|)-[2-\ell+(\ell-1)\tau]\widehat{J}(t,|\xi|)\big)\right)\right\|_{L^1}\lesssim \begin{cases}
	(1+t)^{\frac{1}{2}(1+\lfloor\frac{n}{2}\rfloor)}&\mbox{if}\ s=0,\\
	(1+t)^{\frac{n}{4}-\frac{1}{2}-\frac{s}{2}}&\mbox{if}\ s\in(0,+\infty).
	\end{cases}
\end{align*}
	    Finally, thanks to \eqref{widehat-K-expression} associated with all derived estimates in the above, carrying out some comparisons, the proofs are immediately completed.
\end{proof}
	
\begin{prop}\label{prop:L^infty-small}
		Let $n\geqslant 2$ and $s\geqslant 0$. Then, the kernels satisfy the following $L^{\infty}$ estimates:
		\begin{align*}
			\left\| \ml{F}^{-1}_{\xi \to x}\left(\chi_1(|\xi|)|\xi|^{s}\widehat{K}_\ell(t,|\xi|)\right)\right\|_{L^\infty}\lesssim\begin{cases}
				(1+t)^{-\frac{n}{2}-\frac{s}{2}}&\mbox{if}\ \ell=0,\\
				(1+t)^{-\frac{n}{2}+\frac{1}{2}-\frac{s}{2}}&\mbox{if}\ \ell=1,\\
				(1+t)^{-\frac{n}{2}+\frac{1}{2}-\frac{s}{2}}&\mbox{if}\ \ell=2.
			\end{cases}
		\end{align*}
 Furthermore, by subtracting the corresponding profiles of these kernels, the error terms satisfy the following refined $L^{\infty}$ estimates:
		\begin{align*}
			\left\| \ml{F}^{-1}_{\xi \to x}\left(\chi_1(|\xi|)|\xi|^{s}\left(\widehat{K}_\ell(t,|\xi|)-[2-\ell+(\ell-1)\tau]\widehat{J}(t,|\xi|)\right)\right)\right\|_{L^\infty}\lesssim (1+t)^{-\frac{n}{2}-\frac{s}{2}}
		\end{align*}
	with $\ell=1,2$.
	\end{prop}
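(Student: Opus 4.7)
My strategy starts from the trivial bound $\|\ml{F}^{-1}_{\xi \to x}(g)\|_{L^\infty} \lesssim \|g\|_{L^1_\xi}$, which reduces every assertion to controlling the $L^1_\xi$ norm of the corresponding Fourier symbol by a radial integral in polar coordinates. I will reuse the asymptotic expansions of $\widehat{K}_\ell^1$, $\widehat{K}_\ell^{\cos}$, and $\widehat{K}_\ell^{\sin}$ carried out in the proof of Proposition \ref{prop:L^1-small}, so that everything reduces to the master estimate
\[ \int_0^{2\epsilon_0} r^{s + 2\beta + n - 1}\, \mathrm{e}^{-c r^2 t}\, \mathrm{d}r \lesssim (1+t)^{-n/2 - s/2 - \beta} \quad \text{for } c > 0 \text{ and } s + 2\beta + n > 0, \]
obtained via the rescaling $r = t^{-1/2}\rho$ for $t \geq 1$ together with trivial boundedness for $t \in [0,1]$.

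For the exponential pieces $\widehat{K}_\ell^1$, choosing $\epsilon_0$ sufficiently small absorbs $\delta |\xi|^2 t$ into $-t/\tau$ on the support of $\chi_1$, producing $\mathrm{e}^{-t/(2\tau)}$ decay that dominates any polynomial rate. For the oscillatory pieces $\widehat{K}_\ell^{\cos}, \widehat{K}_\ell^{\sin}$, I will bound $|\sin|, |\cos| \leq 1$ and apply the master estimate term by term to the expansions from the proof of Proposition \ref{prop:L^1-small}. The resulting decay rate is then determined by the smallest value of $\beta$ appearing in the expansion: for $\widehat{K}_0$ this is $\beta = 0$, realized by the $\cos(|\xi|t)\,\mathrm{e}^{-\delta|\xi|^2 t/2}$ term, yielding $(1+t)^{-n/2 - s/2}$; for $\widehat{K}_1$ and $\widehat{K}_2$ this is $\beta = -1/2$, realized by the singular leading term $\sin(|\xi|t)/|\xi| \cdot \mathrm{e}^{-\delta|\xi|^2 t/2}$, yielding $(1+t)^{-n/2 + 1/2 - s/2}$. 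The integrability constraint $s + n - 1 > 0$ that arises in this last case is precisely where the hypothesis $n \geq 2$ is needed.

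For the refined error estimate, subtracting $[2-\ell+(\ell-1)\tau]\widehat{J}$ from $\widehat{K}_\ell$ (with $\ell = 1, 2$) exactly cancels the $|\xi|^{-1}$ singular contribution identified in the proof of Proposition \ref{prop:L^1-small}; every remaining term from $\widehat{K}_\ell^{\cos}$ and the corrected $\widehat{K}_\ell^{\sin}$ then carries $|\xi|$ to a non-negative power at least $1$, while $\widehat{K}_\ell^1$ still decays exponentially. Running the master estimate once more produces the bound $(1+t)^{-n/2 - s/2 - 1/2}$, which is in fact stronger than the claimed $(1+t)^{-n/2 - s/2}$.

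The main bookkeeping issue is handling the higher-order remainders of the form $|\xi|^{2\beta} t^k g_0(|\xi|t)\, \mathrm{e}^{-\delta|\xi|^2 t/2}$ with $k \geq 1$ appearing in the expansions. Each additional factor of $t$ is paired with a higher value of $\beta$ that costs $t^{-1}$ under the master estimate, so the net scaling is preserved and these remainders remain uniformly subdominant; this verification is essentially the only technical step beyond the routine application of the master estimate.
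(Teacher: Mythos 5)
Your approach is essentially the paper's own: both reduce the $L^\infty$ bound to the $L^1_\xi$ norm of the symbol, use the pointwise bounds coming from the small-frequency expansions, and evaluate $\int_0^{2\epsilon_0} r^{s+2\beta+n-1}\,\mathrm{e}^{-cr^2t}\,\mathrm{d}r$ after identifying the most singular term, with $n\geqslant 2$ entering exactly through the integrability of $r^{s+n-2}$ near $r=0$. One bookkeeping slip in the refined part: you assert that after subtracting $[2-\ell+(\ell-1)\tau]\widehat{J}$ every surviving term carries at least one power of $|\xi|$, hence an improved rate $(1+t)^{-\frac{n}{2}-\frac{s}{2}-\frac{1}{2}}$. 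That is true for $\ell=1$ (where $\widehat{K}_1^{\cos}=O(|\xi|^2)$) but false for $\ell=2$: the expansion of $\widehat{K}_2^{\cos}$ begins with $-\tau^2\cos(|\xi|t)\,\mathrm{e}^{-\frac{\delta}{2}|\xi|^2t}$, which is $O(1)$ in $|\xi|$ and is not affected by subtracting $\tau\widehat{J}$ (that subtraction only removes the $|\xi|^{-1}$ sine singularity). Running your master estimate with $\beta=0$ on this term gives exactly $(1+t)^{-\frac{n}{2}-\frac{s}{2}}$ for $\ell=2$, which is the rate stated in the proposition, so your argument still proves the claim; only the extra gain of $(1+t)^{-\frac{1}{2}}$ you announce is unavailable in that case.
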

\begin{remark}\label{Rem-02}
	For the sake of convenient, a direct computation implies that the totally estimates are determined by
	\begin{align*}
		\sum\limits_{\ell=0,1,2}\left\| \ml{F}^{-1}_{\xi\to x}\left(\chi_1(|\xi|)|\xi|^s\widehat{K}_{\ell}(t,|\xi|)\right)\right\|_{L^{\infty}}\lesssim (1+t)^{-\frac{n}{2}+\frac{1}{2}-\frac{s}{2}},
	\end{align*}
	moreover,
	\begin{align*}
		\sum\limits_{\ell=1,2}\left\| \ml{F}^{-1}_{\xi\to x}\left(\chi_1(|\xi|)|\xi|^s\big(\widehat{K}_{\ell}(t,|\xi|)-[2-\ell+(\ell-1)\tau]\widehat{J}(t,|\xi|)\big)\right)\right\|_{L^{\infty}}\lesssim(1+t)^{-\frac{n}{2}-\frac{s}{2}}.
	\end{align*}
\end{remark}
	\begin{proof}
		Thanks to asymptotic behavior for the kernels in the Fourier space, i.e. \eqref{eq:K_1^1-expression}, \eqref{eq:K_1^cos-expression} and \eqref{eq:K_1^sin-expression}, we can obtain the pointwise estimates
		\begin{align*}
			\chi_1(|\xi|)|\widehat{K}_1(t,|\xi|)| &\leqslant\chi_1(|\xi|)\left( |\widehat{K}_1^1(t,|\xi|)|+|\widehat{K}_1^{\cos}(t,|\xi|)|+|\widehat{K}_1^{\sin}(t,|\xi|)|\right)\\
			&\lesssim \chi_1(|\xi|)\left(|\xi|^2\,\mathrm{e}^{-ct}+|\xi|^2|\cos(|\xi|t)|\,\mathrm{e}^{-c|\xi|^2t}+|\xi|^{-1}|\sin(|\xi|t)|\,\mathrm{e}^{-c|\xi|^2t}\right)\\
			&\lesssim\chi_1(|\xi|) |\xi|^{-1}\,\mathrm{e}^{-c|\xi|^2t}.
		\end{align*}
		It immediately gives
		\begin{align*}
			\left\|\ml{F}^{-1}_{\xi\to x}\left(\chi_1(|\xi|)|\xi|^s\widehat{K}_1(t,|\xi|)\right)\right\|_{L^\infty}&\lesssim\left\|\chi_1(|\xi|) |\xi|^s\widehat{K}_1(t,|\xi|)\right\|_{L^1}\\
			& \lesssim \int_0^{2\epsilon_0}\mathrm{e}^{-cr^2t}\,r^{n+s-2}\,\mathrm{d}r\lesssim (1+t)^{-\frac{n}{2}+\frac{1}{2}-\frac{s}{2}}
		\end{align*}
		for $n\geqslant 2$ due to $n+s-2>-1$. Additionally, the subtraction with its leading term shows
		\begin{align*}
			\chi_1(|\xi|)|\widehat{K}_1(t,|\xi|)-\widehat{J}(t,|\xi|)| \lesssim \chi_1(|\xi|)\left(\mathrm{e}^{-ct}+\mathrm{e}^{-c|\xi|^2t}\right),
		\end{align*}
		which leads to 
		\begin{align*}
			\left\|\ml{F}^{-1}_{\xi\to x}\left(\chi_1(|\xi|)|\xi|^s\big(\widehat{K}_1(t,|\xi|)-\widehat{J}(t,|\xi|)\big)\right)\right\|_{L^\infty}\lesssim (1+t)^{-\frac{n}{2}-\frac{1}{2}-\frac{s}{2}}.
		\end{align*}
		The other cases can be followed by the last computations associated with
		\begin{align*}
		\chi_{1}(|\xi|)|\widehat{K}_0(t,|\xi|)|&\lesssim \chi_{1}(|\xi|)\left(\mathrm{e}^{-ct}+\mathrm{e}^{-c|\xi|^2t}\right),\\
		\chi_{1}(|\xi|)|\widehat{K}_2(t,|\xi|)|&\lesssim\chi_{1}(|\xi|)\left(\mathrm{e}^{-ct}+|\xi|^{-1}\,\mathrm{e}^{-c|\xi|^2t}\right),\\
		\chi_1(|\xi|)|\widehat{K}_2(t,|\xi|)-\tau\widehat{J}(t,|\xi|)| &\lesssim \chi_1(|\xi|)\left(\mathrm{e}^{-ct}+\mathrm{e}^{-c|\xi|^2t}\right).
		\end{align*}
	The proof is complete.
	\end{proof}
\begin{remark}
	The estimates in \cite[Proposition 3.4]{Dao-Reissig=2019} can be improved via $(1+t)^{-\frac{1}{2}}$ if $n\geqslant 2$ by following our approach where we used the boundedness of $\sin(|\xi|t)$ instead of $\sin(|\xi|t)\leqslant |\xi|t$.
\end{remark}

From Propositions \ref{prop:L^1-small} and \ref{prop:L^infty-small} (see also Remarks \ref{Rem-01} and \ref{Rem-02}), by employing the Riesz-Thorin interpolation argument we may conclude the following $L^r$ estimates.
\begin{prop}\label{prop:L^r-small}
	Let $n\geqslant 2$, $s\geqslant0$ and $r\in[1,+\infty]$. Then, the kernels satisfy the following $L^r$ estimates:
	\begin{align*}
		\sum\limits_{\ell=0,1,2}\left\| \ml{F}^{-1}_{\xi\to x}\left(\chi_1(|\xi|)|\xi|^s\widehat{K}_{\ell}(t,|\xi|)\right)\right\|_{L^r}\lesssim\begin{cases}
			(1+t)^{(1+\frac{n}{2}+\frac{1}{2}\lfloor\frac{n}{2}\rfloor)\frac{1}{r}-\frac{n}{2}+\frac{1}{2}-\frac{s}{2}}&\mbox{if}\ s\in[0,1)\cup(1,2],\\
			(1+t)^{(\frac{1}{2}+\frac{n}{2}+\frac{1}{2}\lfloor\frac{n}{2}\rfloor)\frac{1}{r}-\frac{n}{2}}&\mbox{if}\ s=1,\\			
			(1+t)^{\frac{3n}{4r}-\frac{n}{2}+\frac{1}{2}-\frac{s}{2}}&\mbox{if}\ s\in(2,+\infty).
		\end{cases}
	\end{align*}
Furthermore, by subtracting the corresponding profiles of these kernels, the error terms satisfy the following refined $L^r$ estimates:
\begin{align*}
	&\sum\limits_{\ell=1,2}\left\| \ml{F}^{-1}_{\xi\to x}\left(\chi_1(|\xi|)|\xi|^s\big(\widehat{K}_{\ell}(t,|\xi|)-[2-\ell+(\ell-1)\tau]\widehat{J}(t,|\xi|)\big)\right)\right\|_{L^r}\\
	&\lesssim\begin{cases}
		(1+t)^{(\frac{1}{2}+\frac{n}{2}+\frac{1}{2}\lfloor\frac{n}{2}\rfloor)\frac{1}{r}-\frac{n}{2}}&\mbox{if}\ s=0,\\
		(1+t)^{(1+\frac{n}{2}+\frac{1}{2}\lfloor\frac{n}{2}\rfloor)\frac{1}{r}-\frac{n}{2}-\frac{s}{2}}&\mbox{if}\ s\in(0,1],\\
		(1+t)^{\frac{3n}{4r}-\frac{n}{2}-\frac{s}{2}}&\mbox{if}\ s\in(1,+\infty).
	\end{cases}
\end{align*}
\end{prop}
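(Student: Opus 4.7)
The plan is straightforward interpolation. I observe that the $L^1$ bounds summarized in Remark \ref{Rem-01} and the $L^\infty$ bounds summarized in Remark \ref{Rem-02} (both valid under the hypothesis $n\geqslant 2$ needed for the $L^\infty$ step) give a complete picture at the two endpoints $r=1$ and $r=\infty$. The only ingredient remaining is to pass from these endpoints to a generic $r\in[1,+\infty]$, which I will do via the classical log-convexity (Riesz--Thorin) inequality
\begin{align*}
\|f\|_{L^r}\leqslant \|f\|_{L^1}^{1/r}\,\|f\|_{L^\infty}^{1-1/r},
\end{align*}
applied to each of the functions $\ml{F}^{-1}_{\xi\to x}(\chi_1(|\xi|)|\xi|^s\widehat{K}_{\ell}(t,|\xi|))$ and $\ml{F}^{-1}_{\xi\to x}(\chi_1(|\xi|)|\xi|^s(\widehat{K}_{\ell}(t,|\xi|)-[2-\ell+(\ell-1)\tau]\widehat{J}(t,|\xi|)))$.

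For the kernels themselves I will simply substitute the three $L^1$ rates from Remark \ref{Rem-01} together with the single $L^\infty$ rate $(1+t)^{-n/2+1/2-s/2}$ into the interpolation inequality, and collect exponents. For instance, in the regime $s\in[0,1)\cup(1,2]$ the exponent becomes
\begin{align*}
\tfrac{1}{r}\bigl[\tfrac{1}{2}(3+\lfloor n/2\rfloor)-\tfrac{s}{2}\bigr]+\bigl(1-\tfrac{1}{r}\bigr)\bigl[-\tfrac{n}{2}+\tfrac{1}{2}-\tfrac{s}{2}\bigr]=\bigl(1+\tfrac{n}{2}+\tfrac{1}{2}\lfloor n/2\rfloor\bigr)\tfrac{1}{r}-\tfrac{n}{2}+\tfrac{1}{2}-\tfrac{s}{2},
\end{align*}
which is precisely the claimed rate; the cases $s=1$ and $s\in(2,+\infty)$ are identical bookkeeping. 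For the refined estimates on the error terms, the same computation is carried out with the $L^1$ rates from the second half of Remark \ref{Rem-01} and the $L^\infty$ rate $(1+t)^{-n/2-s/2}$ from Remark \ref{Rem-02}, yielding the three cases $s=0$, $s\in(0,1]$, $s\in(1,+\infty)$.

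Since the underlying work---deriving the endpoint estimates via Lemmas \ref{lem-oscillating-sincos} and \ref{lem-oscillating-e} combined with the Fourier multiplier expansions---has already been completed in Propositions \ref{prop:L^1-small} and \ref{prop:L^infty-small}, there is no substantive obstacle in this proposition. The only mild care needed is to verify that the interpolation is done with the correct $\theta=1-1/r$ (so that $\frac{1}{r}=\frac{1-\theta}{1}+\frac{\theta}{\infty}$) and to write down cleanly how the exponents add case by case; essentially everything reduces to arithmetic. I therefore expect the proof to be a single short paragraph noting the interpolation inequality, followed by a one-line reference to Remarks \ref{Rem-01}--\ref{Rem-02} and a statement that the exponents then match by direct substitution.
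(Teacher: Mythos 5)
Your proposal is correct and coincides exactly with the paper's own (unwritten-out) argument: the paper derives Proposition \ref{prop:L^r-small} precisely by interpolating the $L^1$ bounds of Remark \ref{Rem-01} with the $L^\infty$ bounds of Remark \ref{Rem-02} via $\|f\|_{L^r}\leqslant\|f\|_{L^1}^{1/r}\|f\|_{L^\infty}^{1-1/r}$, and your exponent bookkeeping in each case matches the stated rates.
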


An application of the Young convolution inequality to
\begin{align*}
|D|^s\varphi^{>0}(t,x)=\sum\limits_{\ell=0,1,2}|D|^sK_{\ell}(t,x)\ast_{(x)}\varphi_{\ell}^{>0}(x)
\end{align*}
in the $L^r$ norm with $\frac{1}{r}=1+\frac{1}{q}-\frac{1}{p}$ and Proposition \ref{prop:L^r-small} concludes the next result.
\begin{prop}\label{prop:L^m-L^q-small}
Let $1\leqslant p\leqslant q\leqslant +\infty$ and $s\geqslant0$.	The solution localizing in the small frequency zone to the dissipative MGT equation \eqref{Eq_MGT} with $\delta>0$ for $n\geqslant 2$ satisfies the following $L^p-L^q$ estimates:
	\begin{align*}
		\|\varphi^{>0}(t,\cdot)\|_{\dot{H}^{s}_{q,\chi_1}}\lesssim \begin{cases}
			(1+t)^{-(1+\frac{n}{2}+\frac{1}{2}\lfloor\frac{n}{2}\rfloor)(\frac{1}{p}-\frac{1}{q})+\frac{3}{2}+\frac{1}{2}\lfloor\frac{n}{2}\rfloor-\frac{s}{2}}\|(\varphi_0^{>0},\varphi_1^{>0},\varphi_2^{>0})\|_{(L^p)^3}&\mbox{if}\ s\in[0,1)\cup(1,2],\\
			(1+t)^{-(\frac{1}{2}+\frac{n}{2}+\frac{1}{2}\lfloor\frac{n}{2}\rfloor)(\frac{1}{p}-\frac{1}{q})+\frac{1}{2}+\frac{1}{2}\lfloor\frac{n}{2}\rfloor}\|(\varphi_0^{>0},\varphi_1^{>0},\varphi_2^{>0})\|_{(L^p)^3}&\mbox{if}\ s=1,\\			
			(1+t)^{-\frac{3n}{4}(\frac{1}{p}-\frac{1}{q})+\frac{1}{2}+\frac{n}{4}-\frac{s}{2}}\|(\varphi_0^{>0},\varphi_1^{>0},\varphi_2^{>0})\|_{(L^p)^3}&\mbox{if}\ s\in(2,+\infty).
		\end{cases}
	\end{align*}
	Furthermore, by subtracting the profile
	\begin{align*}
	\Psi(t,x):=\ml{F}^{-1}_{\xi\to x}\left(\frac{\sin(|\xi|t)}{|\xi|}\,\mathrm{e}^{-\frac{\delta}{2}|\xi|^2t}\right)\left(\varphi_1^{>0}(x)+\tau\varphi_2^{>0}(x)\right),
	\end{align*} the following refined estimates hold:
	\begin{align*}
		&\| (\varphi^{>0}-\Psi)(t,\cdot)\|_{\dot{H}^{s}_{q,\chi_1}}\lesssim\begin{cases}
			(1+t)^{-(\frac{1}{2}+\frac{n}{2}+\frac{1}{2}\lfloor\frac{n}{2}\rfloor)(\frac{1}{p}-\frac{1}{q})+\frac{1}{2}+\frac{1}{2}\lfloor\frac{n}{2}\rfloor}\|(\varphi_0^{>0},\varphi_1^{>0},\varphi_2^{>0})\|_{(L^p)^3}&\mbox{if}\ s=0,\\
			(1+t)^{-(1+\frac{n}{2}+\frac{1}{2}\lfloor\frac{n}{2}\rfloor)(\frac{1}{p}-\frac{1}{q})+1+\frac{1}{2}\lfloor\frac{n}{2}\rfloor-\frac{s}{2}}\|(\varphi_0^{>0},\varphi_1^{>0},\varphi_2^{>0})\|_{(L^p)^3}&\mbox{if}\ s\in(0,1],\\
			(1+t)^{-\frac{3n}{4}(\frac{1}{p}-\frac{1}{q})+\frac{n}{4}-\frac{s}{2}}\|(\varphi_0^{>0},\varphi_1^{>0},\varphi_2^{>0})\|_{(L^p)^3}&\mbox{if}\ s\in(1,+\infty).
		\end{cases}
	\end{align*}
\end{prop}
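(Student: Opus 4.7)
The plan is to reduce the statement to a direct application of Young's convolution inequality combined with the $L^r$ kernel bounds already established in Proposition \ref{prop:L^r-small}. Starting from the representation of the solution displayed just above the proposition,
\begin{align*}
|D|^s\chi_1(|D|)\varphi^{>0}(t,x)=\sum_{\ell=0,1,2}|D|^s\chi_1(|D|)K_{\ell}(t,\cdot)\ast_{(x)}\varphi_{\ell}^{>0}(x),
\end{align*}
I would fix $r\in[1,+\infty]$ by $\tfrac{1}{r}=1+\tfrac{1}{q}-\tfrac{1}{p}$, which is admissible because $1\leqslant p\leqslant q\leqslant+\infty$. The Young convolution inequality then gives
\begin{align*}
\|\varphi^{>0}(t,\cdot)\|_{\dot{H}^{s}_{q,\chi_1}}\leqslant\sum_{\ell=0,1,2}\left\|\ml{F}^{-1}_{\xi\to x}\!\left(\chi_1(|\xi|)|\xi|^s\widehat{K}_{\ell}(t,|\xi|)\right)\right\|_{L^r}\|\varphi_{\ell}^{>0}\|_{L^p}.
\end{align*}
Plugging in the three-regime bound from Proposition \ref{prop:L^r-small} and substituting $\tfrac{1}{r}=1-(\tfrac{1}{p}-\tfrac{1}{q})$ into the time exponents transfers the $s\in[0,1)\cup(1,2]$, $s=1$, and $s\in(2,+\infty)$ dichotomy directly into the three cases claimed, completing the first part of the proposition.

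For the refined estimate I would exploit the linearity of $\chi_1(|D|)$ and the definition of $\Psi$ to write
\begin{align*}
\chi_1(|D|)(\varphi^{>0}-\Psi)(t,x)=\chi_1(|D|)K_0(t,\cdot)\ast_{(x)}\varphi_0^{>0}(x)+\sum_{\ell=1,2}\chi_1(|D|)\bigl(K_{\ell}-[2-\ell+(\ell-1)\tau]J\bigr)(t,\cdot)\ast_{(x)}\varphi_{\ell}^{>0}(x),
\end{align*}
so that the error is a sum of three convolutions whose kernels are precisely those whose refined $L^r$ bounds appear in the second half of Proposition \ref{prop:L^r-small}. Applying Young's inequality once more with the same exponent $r$ and substituting back yields the three regimes $s=0$, $s\in(0,1]$, and $s\in(1,+\infty)$ stated. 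The $K_0$ contribution requires no subtraction: its small-frequency expansion begins at order $|\xi|^2$, and the resulting $L^r$ decay, obtained by interpolating the $L^1$ bound of Proposition \ref{prop:L^1-small} with the $L^\infty$ bound $(1+t)^{-n/2-s/2}$ of Proposition \ref{prop:L^infty-small}, is no slower than the refined rates carried by $K_1-J$ and $K_2-\tau J$.

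The proof is essentially mechanical once Proposition \ref{prop:L^r-small} is in hand; there is no new analytic input. The main obstacle is therefore purely bookkeeping: verifying that the three-case dichotomy in $s$ from Proposition \ref{prop:L^r-small} survives intact under the linear substitution $\tfrac{1}{r}=1-(\tfrac{1}{p}-\tfrac{1}{q})$, and checking explicitly that in the refined estimate the $K_0$ term is dominated by the bounds coming from $K_\ell-[2-\ell+(\ell-1)\tau]J$ in every one of the three $s$-ranges. Once these exponent comparisons are made, the Young inequality closes the argument and gives both sets of estimates.
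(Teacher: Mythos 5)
Your proposal is correct and follows exactly the paper's route: the paper likewise deduces this proposition in one step by applying Young's convolution inequality with $\frac{1}{r}=1+\frac{1}{q}-\frac{1}{p}$ to the kernel representation and inserting the $L^r$ bounds of Proposition \ref{prop:L^r-small}. Your explicit check that the $K_0\ast\varphi_0^{>0}$ contribution matches the refined rates (its $L^1$ and $L^\infty$ bounds from Propositions \ref{prop:L^1-small} and \ref{prop:L^infty-small} coincide with those of $K_\ell-[2-\ell+(\ell-1)\tau]J$) is a detail the paper leaves implicit, and your exponent substitutions reproduce the stated rates in all three $s$-regimes.
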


\subsection{$L^q-L^q$ estimates for large frequencies}\label{Sub-Section-Large}
\hspace{5mm} As our preparations for applications of the Mikhlin-H\"omander multiplier theorem (cf. Lemma \ref*{Minklin-Homander-Theorem}) in the forthcoming part, let us state some sharp estimates for the multipliers related to the kernels in the Fourier space.
\begin{lemma}\label{lem:auxiliary-lem-large}
	Let $\alpha\in \mb{N}_0^n$ and $\sigma\geqslant 0$. The following pointwise estimates hold in the large frequency zone $\{\xi\in\mb{R}^n:|\xi|\geqslant N_0\}$:
	\begin{align}
		\label{lem-auxiliary-profile}&\left|\partial_{\xi}^\alpha\left(|\xi|^{\sigma}\sin(|\xi|t )\, \mathrm{e}^{-\frac{\delta}{2}|\xi|^2t}\right)\right|\lesssim \mathrm{e}^{-c|\xi|^2t}\,|\xi|^{\sigma-|\alpha|},\\ \label{lem-auxiliary-2-1}&\left|\partial_{\xi}^\alpha\frac{-\mathrm{e}^{\lambda_1t}\,|\xi|^\sigma}{2\mu_{\mathrm{R}}\lambda_1-\mu_{\mathrm{I}}^2-\mu_{\mathrm{R}}^2-\lambda_1^2}\right|\lesssim \mathrm{e}^{-ct}\,|\xi|^{-2+\sigma-|\alpha|},\\
		\label{lem-auxiliary-2-cos}&\left|\partial_{\xi}^\alpha\frac{\cos(\mu_{\mathrm{I}}t )\, \mathrm{e}^{\mu_{\mathrm{R}} t}\,|\xi|^\sigma}{2\mu_{\mathrm{R}}\lambda_1-\mu_{\mathrm{I}}^2-\mu_{\mathrm{R}}^2-\lambda_1^2}\right|\lesssim \mathrm{e}^{-ct}\,|\xi|^{-2+\sigma},\\
		\label{lem-auxiliary-2-sin}&\left|\partial_{\xi}^\alpha\frac{-(\mu_{\mathrm{R}}-\lambda_1)\sin(\mu_{\mathrm{I}}t )\, \mathrm{e}^{\mu_{\mathrm{R}} t}\,|\xi|^\sigma}{\mu_{\mathrm{I}}(2\mu_{\mathrm{R}}\lambda_1-\mu_{\mathrm{I}}^2-\mu_{\mathrm{R}}^2-\lambda_1^2)}\right|\lesssim \mathrm{e}^{-ct}\,|\xi|^{-3+\sigma},\\
		\notag&\left|\partial_{\xi}^\alpha\frac{2\mu_{\mathrm{R}}\, \mathrm{e}^{\lambda_1t}\,|\xi|^\sigma}{2\mu_{\mathrm{R}}\lambda_1-\mu_{\mathrm{I}}^2-\mu_{\mathrm{R}}^2-\lambda_1^2}\right|\lesssim \mathrm{e}^{-ct}\,|\xi|^{-2+\sigma-|\alpha|},\\
		\notag&\left|\partial_{\xi}^\alpha\frac{-2\mu_{\mathrm{R}}\cos(\mu_{\mathrm{I}}t )\, \mathrm{e}^{\mu_{\mathrm{R}} t}\,|\xi|^\sigma}{2\mu_{\mathrm{R}}\lambda_1-\mu_{\mathrm{I}}^2-\mu_{\mathrm{R}}^2-\lambda_1^2}\right|\lesssim \mathrm{e}^{-ct}\,|\xi|^{-2+\sigma},\\
		\notag&\left|\partial_{\xi}^\alpha\frac{(\mu_{\mathrm{R}}^2-\mu_{\mathrm{I}}^2-\lambda_1^2)\sin(\mu_{\mathrm{I}}t )\, \mathrm{e}^{\mu_{\mathrm{R}} t}\,|\xi|^\sigma}{\mu_{\mathrm{I}}(2\mu_{\mathrm{R}}\lambda_1-\mu_{\mathrm{I}}^2-\mu_{\mathrm{R}}^2-\lambda_1^2)}\right|\lesssim \mathrm{e}^{-ct}\,|\xi|^{-1+\sigma},\\
		\notag&\left|\partial_{\xi}^\alpha\frac{-(\mu_{\mathrm{I}}^2+\mu_{\mathrm{R}}^2)\, \mathrm{e}^{\lambda_1t}\,|\xi|^\sigma}{2\mu_{\mathrm{R}}\lambda_1-\mu_{\mathrm{I}}^2-\mu_{\mathrm{R}}^2-\lambda_1^2}\right|\lesssim \mathrm{e}^{-ct}\,|\xi|^{\sigma-|\alpha|},\\
		\notag&\left|\partial_{\xi}^\alpha\frac{(2\mu_{\mathrm{R}}\lambda_1-\lambda_1^2)\cos(\mu_{\mathrm{I}}t )\, \mathrm{e}^{\mu_{\mathrm{R}} t}\,|\xi|^\sigma}{2\mu_{\mathrm{R}}\lambda_1-\mu_{\mathrm{I}}^2-\mu_{\mathrm{R}}^2-\lambda_1^2}\right|\lesssim \mathrm{e}^{-ct}\,|\xi|^{-2+\sigma},\\
		\notag&\left|\partial_{\xi}^\alpha\frac{\lambda_1(\mu_{\mathrm{R}}\lambda_1+\mu_{\mathrm{I}}^2-\mu_{\mathrm{R}}^2)\sin(\mu_{\mathrm{I}}t )\, \mathrm{e}^{\mu_{\mathrm{R}} t}\,|\xi|^\sigma}{\mu_{\mathrm{I}}(2\mu_{\mathrm{R}}\lambda_1-\mu_{\mathrm{I}}^2-\mu_{\mathrm{R}}^2-\lambda_1^2)}\right|\lesssim \mathrm{e}^{-ct}\,|\xi|^{-1+\sigma}.
	\end{align}
\end{lemma}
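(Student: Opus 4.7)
The plan is to combine the large-frequency asymptotic expansions of $\lambda_1,\mu_{\mathrm R},\mu_{\mathrm I}$ recalled in the introduction with systematic Leibniz differentiation, using part of the exponential decay to absorb polynomial-in-$t$ factors produced by differentiating the oscillatory or exponential building blocks. Once and for all, on $\{|\xi|\geqslant N_0\}$, the expansions give $\lambda_1\to -1/(\delta+\tau)$ with $|\partial_{\xi}^\alpha\lambda_1|\lesssim |\xi|^{-2-|\alpha|}$ for $|\alpha|\geqslant 1$; $\mu_{\mathrm R}$ tending to a negative constant with $|\partial_\xi^\alpha \mu_{\mathrm R}|\lesssim |\xi|^{-2-|\alpha|}$; $\mu_{\mathrm I}=\sqrt{(\delta+\tau)/\tau}\,|\xi|+O(|\xi|^{-1})$, so $|\partial_\xi^\alpha\mu_{\mathrm I}|\lesssim |\xi|^{1-|\alpha|}$; and the determinant $\Lambda_0\sim -|\xi|^{2}(\delta+\tau)/\tau$, hence $|\partial_\xi^\alpha(1/\Lambda_0)|\lesssim |\xi|^{-2-|\alpha|}$. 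These pieces are what each of the subsequent estimates boils down to.

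For the profile bound \eqref{lem-auxiliary-profile}, I would apply the Leibniz rule to $|\xi|^\sigma\sin(|\xi|t)\,\mathrm{e}^{-\frac{\delta}{2}|\xi|^2 t}$. Derivatives of $|\xi|^\sigma$ lower the power by one per derivative; derivatives of $\sin(|\xi|t)$ cost at most a factor of $t$ (with bounded oscillation); derivatives of the Gaussian produce factors of the form $t^j|\xi|^k$ with $j+k\leqslant |\alpha|$. Writing $\mathrm{e}^{-\frac{\delta}{2}|\xi|^2 t}=\mathrm{e}^{-c|\xi|^2 t}\,\mathrm{e}^{-(\frac{\delta}{2}-c)|\xi|^2 t}$ and using the elementary bound $t^k\mathrm{e}^{-(\frac{\delta}{2}-c)|\xi|^2 t}\lesssim |\xi|^{-2k}$ absorbs all time factors, and a term-by-term count yields exactly $|\xi|^{\sigma-|\alpha|}\mathrm{e}^{-c|\xi|^2 t}$.

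For the $\lambda_1$-block \eqref{lem-auxiliary-2-1} and its companion with numerator $-(\mu_{\mathrm I}^2+\mu_{\mathrm R}^2)$, I would split $\mathrm{e}^{\lambda_1 t}=\mathrm{e}^{-t/(\delta+\tau)}\cdot\mathrm{e}^{(\lambda_1+1/(\delta+\tau))t}$; since $\lambda_1+1/(\delta+\tau)=O(|\xi|^{-2})$ the second factor and all its $\xi$-derivatives are bounded (the derivatives of $\lambda_1$ bring in further negative powers of $|\xi|$, whereas the $t^k$ factors produced by Fa\`a di Bruno are dominated by a fraction of the leading $\mathrm{e}^{-t/(\delta+\tau)}$). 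Combined with the algebraic estimate $|\partial_\xi^\alpha(|\xi|^\sigma/\Lambda_0)|\lesssim |\xi|^{-2+\sigma-|\alpha|}$ this delivers the stated bound; the companion estimate only substitutes the numerator $|\xi|^\sigma\cdot(-(\mu_{\mathrm I}^2+\mu_{\mathrm R}^2))/\Lambda_0\sim |\xi|^\sigma$, so the analogous count gives $\mathrm{e}^{-ct}|\xi|^{\sigma-|\alpha|}$.

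For the oscillatory blocks \eqref{lem-auxiliary-2-cos}, \eqref{lem-auxiliary-2-sin} and the three unnumbered ones, the factor $\mathrm{e}^{\mu_{\mathrm R}t}$ decays like $\mathrm{e}^{-ct}$ since $\mu_{\mathrm R}$ has a strictly negative limit, which absorbs every $t^k$ produced by differentiating $\cos(\mu_{\mathrm I}t),\sin(\mu_{\mathrm I}t)$ through $t^k\mathrm{e}^{-ct/2}\lesssim 1$. Because $\partial_\xi\mu_{\mathrm I}=O(1)$ (not decaying in $|\xi|$), differentiating the oscillations does not reduce the $|\xi|$-power, and consequently the final $|\xi|$-exponent is determined entirely by the algebraic coefficient: $1/\Lambda_0\sim |\xi|^{-2}$ gives the $|\xi|^{-2+\sigma}$ rate of \eqref{lem-auxiliary-2-cos} and its companions; the extra $1/\mu_{\mathrm I}\sim |\xi|^{-1}$ in \eqref{lem-auxiliary-2-sin} raises this to $|\xi|^{-3+\sigma}$; the factor $(\mu_{\mathrm R}^2-\mu_{\mathrm I}^2-\lambda_1^2)/\mu_{\mathrm I}\sim |\xi|$ or $\lambda_1(\mu_{\mathrm R}\lambda_1+\mu_{\mathrm I}^2-\mu_{\mathrm R}^2)/\mu_{\mathrm I}\sim |\xi|$ in the last two estimates reduces the loss from the denominator by one power, matching the displayed $|\xi|^{-1+\sigma}$. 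The only genuine obstacle is the bookkeeping of these Leibniz expansions: keeping separate track of derivatives that reach the algebraic prefactor (which cost $|\xi|^{-1}$ each) and those that reach the exponential or trigonometric factors (which cost only bounded $t$-powers, fully absorbed by the residual exponential decay), while verifying that no cross-term escapes this dichotomy.
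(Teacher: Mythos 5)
Your proposal is correct and follows essentially the same route as the paper: the same large-frequency derivative bounds for $\lambda_1,\mu_{\mathrm R},\mu_{\mathrm I}$ and $1/\Lambda_0$, the Leibniz/Fa\`a di Bruno expansion of each block, absorption of the polynomial $t$-factors from the oscillatory terms into $\mathrm{e}^{\mu_{\mathrm R}t}\lesssim\mathrm{e}^{-ct}$ (resp.\ of $t^k|\xi|^{2k}$ into the Gaussian for \eqref{lem-auxiliary-profile}), and the observation that the worst case is when no derivative falls on the decaying exponential factors. Your explicit splitting $\mathrm{e}^{\lambda_1 t}=\mathrm{e}^{-t/(\delta+\tau)}\mathrm{e}^{(\lambda_1+1/(\delta+\tau))t}$ is only a cosmetic variant of the paper's direct Fa\`a di Bruno computation on $\mathrm{e}^{\mu_{\mathrm R}t}$ and $\mathrm{e}^{\lambda_1 t}$.
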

\begin{proof}
	For the sake of briefness, we only show the proof of \eqref{lem-auxiliary-2-sin}, which is the most complicated one, and the other estimates can be justified analogously. Thanks to the asymptotic expansions of  characteristic roots for large frequencies, it is obvious that 
	\begin{align*}
		 |\partial_\xi^\alpha \lambda_1|\lesssim\begin{cases}
			1&\mbox{if}\ |\alpha|=0\\
			|\xi|^{-2-|\alpha|}&\mbox{if}\ |\alpha|\geqslant 1
		\end{cases},\quad |\partial_\xi^\alpha \mu_{\mathrm{R}}|\lesssim\begin{cases}
		1&\mbox{if}\ |\alpha|=0\\
		|\xi|^{-2-|\alpha|}&\mbox{if}\ |\alpha|\geqslant 1
	\end{cases},\quad 
	 |\partial_\xi^\alpha \mu_{\mathrm{I}}|\lesssim|\xi|^{1-|\alpha|}.
	\end{align*}
	Applying the derivative formula of compound functions in Lemma \ref{FadiBruno'sformula1}, one derives
	\begin{align*}
		|\partial_\xi^\alpha\mathrm{e}^{\mu_{\mathrm{R}}t}|&=\left|\sum_{k=1}^{|\alpha|}t^k\,\mathrm{e}^{\mu_{\mathrm{R}}t}\left(\sum_{\underset{|\gamma_1|+\cdots+|\gamma_k|=|\alpha|,\ |\gamma_j|\geqslant 1}{\gamma_1+\cdots+\gamma_k\leqslant \alpha}}(\partial_\xi^{\gamma_1}\mu_{\mathrm{R}})\,(\partial_\xi^{\gamma_2}\mu_{\mathrm{R}})\cdots(\partial_\xi^{\gamma_k}\mu_{\mathrm{R}})\right)\right|\\ 
		&\lesssim \sum_{k=1}^{|\alpha|}t^k\,\mathrm{e}^{\mu_{\mathrm{R}}t}\left(\sum_{\underset{|\gamma_1|+\cdots+|\gamma_k|=|\alpha|,\ |\gamma_j|\geqslant 1}{\gamma_1+\cdots+\gamma_k\leqslant \alpha}}|\xi|^{-(|\gamma_1|+\cdots+|\gamma_k|)-2k}\right)\\
		&\lesssim \sum_{k=1}^{|\alpha|} t^k\,\mathrm{e}^{-ct}\, |\xi|^{-|\alpha|-2k}\lesssim \mathrm{e}^{-ct}\, |\xi|^{-|\alpha|-2}
	\end{align*}
for $|\alpha|\geqslant 1$, and $|\partial_\xi^\alpha\mathrm{e}^{\mu_{\mathrm{R}}t}|\lesssim \mathrm{e}^{-ct}$ for $|\alpha|=0$, namely, $|\partial_\xi^\alpha\mathrm{e}^{\mu_{\mathrm{R}}t}|\lesssim \mathrm{e}^{-ct}\,|\xi|^{-|\alpha|}$ for any $|\alpha|\geqslant0$.
	By the same way, for any $|\alpha|\geqslant0$, we have
	\begin{align*}
		|\partial_\xi^\alpha \sin(\mu_\mathrm{I}t)|\lesssim 1+t^{|\alpha|},\quad  |\partial_\xi^\alpha |\xi|^\sigma|\lesssim |\xi|^{\sigma-|\alpha|},\quad \left|\partial_\xi^\alpha\big(\mu_{\mathrm{I}}(2\mu_{\mathrm{R}}\lambda_1-\mu_{\mathrm{I}}^2-\mu_{\mathrm{R}}^2-\lambda_1^2)\big)^{-1}\right|\lesssim|\xi|^{-3-|\alpha|}.
	\end{align*}
	Combining the above estimates, for any $|\alpha|\geqslant0$, one arrives at 
	\begin{align*}
		\mbox{LHS of \eqref{lem-auxiliary-2-sin}}&\lesssim \sum_{|\alpha_1|+\cdots+|\alpha_5|=|\alpha|}|\partial_\xi^{\alpha_1}(\mu_{\mathrm{R}}-\lambda_1)|\,|\partial_\xi^{\alpha_2}\sin(\mu_{\mathrm{I}}t)|\,|\partial_\xi^{\alpha_3}\mathrm{e}^{\mu_{\mathrm{R}}t}|\,|\partial_\xi^{\alpha_4}|\xi|^\sigma|\\ &\qquad\qquad\quad\qquad\times \left|\partial_\xi^{\alpha_5}\big(\mu_{\mathrm{I}}(2\mu_{\mathrm{R}}\lambda_1-\mu_{\mathrm{I}}^2-\mu_{\mathrm{R}}^2-\lambda_1^2)\big)^{-1}\right| \\ & \lesssim\sum_{|\alpha_1|+\cdots+|\alpha_5|=|\alpha|}|\xi|^{-|\alpha_1|}(1+t^{|\alpha_2|})\, \mathrm{e}^{-ct}\,|\xi|^{-|\alpha_3|}|\xi|^{\sigma-|\alpha_4|}|\xi|^{-3-|\alpha_5|}\\ &\lesssim \mathrm{e}^{-ct}\,|\xi|^{-3+\sigma},
	\end{align*}
	which demonstrates \eqref{lem-auxiliary-2-sin}. Although one may get better estimates when $|\alpha_1|\geqslant 1$ and $|\alpha_3|\geqslant 1$, the worse case occurs when $|\alpha_1|=|\alpha_3|=0$. It should be additionally mentioned that
	\begin{align*}
	\mbox{LHS of \eqref{lem-auxiliary-profile}}&\lesssim\sum\limits_{|\alpha_1|+|\alpha_2|+|\alpha_3|=|\alpha|}|\xi|^{\sigma-|\alpha_1|}\left(\sum\limits_{k=0}^{|\alpha_2|}t^k|\xi|^{k-|\alpha_2|}\right)\left(\sum\limits_{k=0}^{|\alpha_3|}t^k|\xi|^{2k-|\alpha_3|}\,\mathrm{e}^{-c|\xi|^2t}\right)\\
	&\lesssim \mathrm{e}^{-c|\xi|^2t}\, |\xi|^{\sigma-(|\alpha_1|+|\alpha_2|+|\alpha_3|)},
	\end{align*}
where we applied $t^k|\xi|^{2k}\,\mathrm{e}^{-c_0|\xi|^2t}\lesssim \mathrm{e}^{-c|\xi|^2t}$ and $t^k|\xi|^{k}\,\mathrm{e}^{-c_0|\xi|^2t}\lesssim t^{\frac{k}{2}}\,\mathrm{e}^{-c|\xi|^2t}$ to complete our proof.
\end{proof}

\begin{prop}\label{prop:L^q-L^q-large}
	Let $1<q<+\infty$ and $s\geqslant0$. The solution localizing in the large frequency zone to the dissipative MGT equation \eqref{Eq_MGT} with $\delta>0$ for $n\geqslant 2$ satisfies the following $L^q-L^q$ estimates: 
	\begin{align*}
		\|\varphi^{>0}(t,\cdot)\|_{H^s_{q,\chi_3}}\lesssim \mathrm{e}^{-ct}\|(\varphi_0^{>0},\varphi_1^{>0},\varphi_2^{>0})\|_{({H_q^s}\cap{H_q^{[s-1+s_0]_+}})\times H_q^{[s-1+s_0]_+}\times H_q^{[s-2+s_0]_+}},
	\end{align*}
	with the index $s_0:=n|\frac{1}{q}-\frac{1}{2}|$. Furthermore, the profile $\Psi(t,x)$ defined in Proposition \ref{prop:L^m-L^q-small} satisfies the following $L^q-L^q$ estimates: 
	\begin{align*}
		\|\Psi(t,\cdot)\|_{H^s_{q,\chi_3}}\lesssim \mathrm{e}^{-ct}\|(\varphi_1^{>0},\varphi_2^{>0})\|_{(H_q^{[s-1]_+})^2}.
	\end{align*}
\end{prop}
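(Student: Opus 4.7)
The plan is to substitute the explicit representation \eqref{widehat-K-expression} into
$$|D|^s \chi_3(|D|) \varphi^{>0}(t,x) = \sum_{\ell=0,1,2} |D|^s \chi_3(|D|) K_\ell(t, \cdot) \ast_{(x)} \varphi_\ell^{>0}(x),$$
and then decompose each kernel as $\widehat{K}_\ell = \widehat{K}_\ell^1 + \widehat{K}_\ell^{\cos} + \widehat{K}_\ell^{\sin}$, treating the nine resulting pieces separately by the Mikhlin-H\"ormander multiplier theorem (Lemma \ref{Minklin-Homander-Theorem}), where the pointwise derivative bounds come from Lemma \ref{lem:auxiliary-lem-large}.

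For each piece $\chi_3(|\xi|) |\xi|^s \widehat{K}_\ell^\star$ I would factor out a homogeneous multiplier $|\xi|^{s - \kappa_\ell^\star}$ and absorb it onto the data. The ``gain'' $\kappa_\ell^\star$ is read off directly from Lemma \ref{lem:auxiliary-lem-large}: the exponential pieces involving $\mathrm{e}^{\lambda_1 t}$ give $\kappa = 0$ for $\widehat{K}_0^1$ and $\kappa = 2$ for $\widehat{K}_{1,2}^1$; the cosine pieces give $\kappa = 2$ in every case; and the sine pieces give $\kappa = 1, 1, 3$ for $\ell = 0, 1, 2$ respectively. After extracting these powers, the remaining multiplier is of order zero in $|\xi|$ and satisfies the Mikhlin-H\"ormander condition uniformly in $t$, with an overall prefactor $\mathrm{e}^{-ct}$ inherited from $\mathrm{Re}\,\lambda_j(|\xi|) \leqslant -c < 0$ on the support of $\chi_3$. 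Taking the worst regularity index over $\star \in \{1, \cos, \sin\}$ for each $\ell$ produces exactly the thresholds $H^s_q \cap H^{[s-1+s_0]_+}_q$, $H^{[s-1+s_0]_+}_q$, and $H^{[s-2+s_0]_+}_q$ claimed in the statement, once the oscillatory loss $s_0 = n|\tfrac{1}{q} - \tfrac{1}{2}|$ coming from the factors $\cos(\mu_{\mathrm{I}} t)$, $\sin(\mu_{\mathrm{I}} t)$ is paid to the data via the $L^q$-multiplier framework for wave-type propagators.

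Concerning $\Psi$, estimate \eqref{lem-auxiliary-profile} with $\sigma = s-1$ together with the strong damping $\mathrm{e}^{-\frac{\delta}{2} |\xi|^2 t}$ on $\{|\xi|\geqslant N_0\}$ immediately yields
$$|\xi|^{|\alpha|}\left|\partial_\xi^\alpha \bigl(\chi_3(|\xi|) |\xi|^{s - [s-1]_+} \widehat{J}(t, |\xi|)\bigr)\right| \lesssim \mathrm{e}^{-c|\xi|^2 t} \lesssim \mathrm{e}^{-cN_0^2 t} \lesssim \mathrm{e}^{-ct}$$
uniformly on the support of $\chi_3$, so that after transferring the homogeneous power $|\xi|^{[s-1]_+}$ onto $\varphi_1^{>0} + \tau \varphi_2^{>0}$ the Mikhlin-H\"ormander theorem delivers the desired $L^q-L^q$ estimate for $\Psi$.

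The main obstacle is controlling the oscillating pieces $\widehat{K}_\ell^{\cos, \sin}$ in $L^q$ for $q \neq 2$: the $\xi$-derivatives of $\mathrm{e}^{\pm i \mu_{\mathrm{I}} t}$ do not gain inverse powers of $|\xi|$, which is precisely what forces the loss $s_0 = n|\tfrac{1}{q} - \tfrac{1}{2}|$ on $\varphi_\ell^{>0}$ and is the origin of the bracket $[s - \kappa + s_0]_+$ in the statement. The $|\alpha|$-independent bounds of Lemma \ref{lem:auxiliary-lem-large} are tailored so that any polynomial $t^{|\alpha|}$ growth produced by differentiating the oscillation is harmlessly absorbed into the decay $\mathrm{e}^{\mu_{\mathrm{R}} t}$ on $\{|\xi|\geqslant N_0\}$; verifying this absorption, and bookkeeping the worst index among the nine decomposed kernels, are the delicate points of the argument.
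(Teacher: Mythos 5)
Your proposal is correct and follows essentially the same route as the paper: decompose into the nine kernel pieces, apply the pointwise bounds of Lemma \ref{lem:auxiliary-lem-large} together with the Miyachi variant of the Mikhlin--H\"ormander theorem (with $b=0$ for the non-oscillating pieces and $b=1$, hence the loss $s_0$, for the oscillating ones), and track the worst regularity index per datum; the treatment of $\Psi$ via \eqref{lem-auxiliary-profile} also matches. The only cosmetic difference is that the paper makes the transfer of the homogeneous power onto the data explicit by choosing $\sigma=\min\{s,\kappa\}$ or $\sigma=\min\{s,\kappa-s_0\}$ in each case, which is exactly the bookkeeping your $[s-\kappa+s_0]_+$ notation encodes.
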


	\begin{proof}
		Recalling the representation formula 
		\begin{align*}
			|D|^s\varphi^{>0}(t,x)=\sum_{\ell=0,1,2}\big(|D|^sK_\ell^1(t,|D|)+|D|^sK_\ell^{\cos}(t,|D|)+|D|^sK_\ell^{\sin}(t,|D|)\big)\varphi^{>0}_\ell(x),	
		\end{align*}
		we only show the proof for the third data when $\ell=2$, and other proofs for the first two data when $\ell=0,1$ can be done analogously. It first holds that
		\begin{align*}
			 \chi_3(|D|)|D|^sK_2^1(t,|D|)\varphi_2^{>0}(x)=\ml{F}^{-1}_{\xi\to x}\left(\frac{-\chi_3(|\xi|)\,\mathrm{e}^{\lambda_1 t}\,|\xi|^{\min\{2,s\}}}{2\mu_{\mathrm{R}}\lambda_1-\mu_{\mathrm{I}}^2-\mu_{\mathrm{R}}^2-\lambda_1^2}\,|\xi|^{s-\min\{2,s\}}\widehat{\varphi}_2^{>0}(\xi)\right).
		\end{align*}
		By using \eqref{lem-auxiliary-2-1} with $\sigma=\min\{2,s\}$ so that $-2+\sigma\leqslant0$ and the variant of 
		Mikhlin-H\"omander multiplier theorem in Lemma \ref{Minklin-Homander-Theorem} with $b=0$, one arrives at 
		\begin{align*}
			\|\,|D|^s K_2^1(t,|D|)\varphi_2^{>0}(\cdot)\|_{L^q_{\chi_3}}\lesssim  \mathrm{e}^{-ct}\|\varphi_2^{>0}\|_{H_q^{[s-2]_+}}.
		\end{align*}
		Analogously,
		\begin{align*}
			&\chi_3(|D|)|D|^s K_2^{\cos}(t,|D|)\varphi_2^{>0}(x)\\
			&=\ml{F}^{-1}_{\xi\to x}\left(\frac{\chi_3(|\xi|)\cos(\mu_{\mathrm{I}}t)\, \mathrm{e}^{\mu_{\mathrm{R}} t}\,|\xi|^{\min\{s,2-s_0\}}}{2\mu_{\mathrm{R}}\lambda_1-\mu_{\mathrm{I}}^2-\mu_{\mathrm{R}}^2-\lambda_1^2}\,|\xi|^{s-\min\{s,2-s_0\}}\widehat{\varphi}_2^{>0}(\xi)\right),
		\end{align*}
	as well as			
		\begin{align*}		
			&\chi_3(|D|)|D|^s K_2^{\sin}(t,|D|)\varphi_2^{>0}(x)\\
			&=\ml{F}^{-1}_{\xi\to x}\left(\frac{\chi_3(|\xi|)(\lambda_1-\mu_{\mathrm{R}})\sin(\mu_{\mathrm{I}}t)\, \mathrm{e}^{\mu_{\mathrm{R}} t}\,|\xi|^{\min\{s,3-s_0\}}}{\mu_{\mathrm{I}}(2\mu_{\mathrm{R}}\lambda_1-\mu_{\mathrm{I}}^2-\mu_{\mathrm{R}}^2-\lambda_1^2)}\,|\xi|^{s-\min\{s,3-s_0\}}\widehat{\varphi}_2^{>0}(\xi)\right).
		\end{align*}
		By choosing $\sigma=\min\{s,2-s_0\}$ in \eqref{lem-auxiliary-2-cos} and $\sigma=\min\{s,3-s_0\}$ in \eqref{lem-auxiliary-2-sin}, an application of Lemma \ref{Minklin-Homander-Theorem} with $b=1$ shows 
		\begin{align*}
			\|\,|D|^s K_2^{\cos}(t,|D|)\varphi_2^{>0}(\cdot)\|_{L^q_{\chi_3}}\lesssim  \mathrm{e}^{-ct}\|\varphi_2^{>0}\|_{H_q^{[s-2+s_0]_+}},\\
			\|\,|D|^s K_2^{\sin}(t,|D|)\varphi_2^{>0}(\cdot)\|_{L^q_{\chi_3}}\lesssim  \mathrm{e}^{-ct}\|\varphi_2^{>0}\|_{H_q^{[s-3+s_0]_+}}.
		\end{align*}
		The summary of last estimates yields
		\begin{align*}
			\|\,|D|^s K_2(t,|D|)\varphi_2^{>0}(\cdot)\|_{L^q_{\chi_3}}\lesssim  \mathrm{e}^{-ct}\|\varphi_2^{>0}\|_{H_q^{[s-2+s_0]_+}}.
		\end{align*}
For another, because 
		\begin{align*}
			\chi_3(|D|)|D|^s\Psi(t,x)=\ml{F}^{-1}_{\xi\to x}\left(\chi_3(|\xi|)|\xi|^{\min\{s-1,0\}}\sin(|\xi|t)\, \mathrm{e}^{-\frac{\delta}{2}|\xi|^2t}\,|\xi|^{s-1-\min\{s-1,0\}}\big(\widehat{\varphi}_1^{>0}(\xi)+\tau\widehat{\varphi}_2^{>0}(\xi)\big)\right),
		\end{align*}
		we can complete the proof by choosing $\sigma=\min\{s-1,0\}$ in \eqref{lem-auxiliary-profile}.
	\end{proof}

	\subsection{Refined $L^p-L^q$ estimates}\label{Sub-Section-Smuumary}
	\hspace{5mm} Let us summarize all derived estimates in Propositions \ref{prop:L^m-L^q-middle}, \ref{prop:L^m-L^q-small} and \ref{prop:L^q-L^q-large} to get some refined estimates for the dissipative MGT equation with initial data belonging to
	\begin{align*}
	\ml{D}_{p,q}^s&:=({H_q^s}\cap{H_q^{[s-1+s_0]_+}}\cap L^p)\times (H_q^{[s-1+s_0]_+}\cap L^p)\times (H_q^{[s-2+s_0]_+}\cap L^p),\\
	\widetilde{\ml{D}}_{p,q}^s&:=({H_q^s}\cap{H_q^{[s-1+s_0]_+}}\cap L^p)\times (H_q^{[s-1+s_0]_+}\cap L^p)\times (H_q^{[s-2+s_0]_+}\cap H_q^{[s-1]_+}\cap L^p),
	\end{align*}
with $s_0=n|\frac{1}{q}-\frac{1}{2}|$ and $s\geqslant0$.

	\begin{theorem}\label{thm:L^m-L^q-estimate}
Let $1\leqslant p\leqslant q< +\infty$, $q\neq1$ and $s\geqslant0$.	The solution to the dissipative MGT equation \eqref{Eq_MGT} with $\delta>0$ for $n\geqslant 2$ satisfies the following $( L^q\cap L^p)-L^q$ estimates:
\begin{align*}
	\|\varphi^{>0}(t,\cdot)\|_{\dot{H}^{s}_{q}}\lesssim \begin{cases}
		(1+t)^{-(1+\frac{n}{2}+\frac{1}{2}\lfloor\frac{n}{2}\rfloor)(\frac{1}{p}-\frac{1}{q})+\frac{3}{2}+\frac{1}{2}\lfloor\frac{n}{2}\rfloor-\frac{s}{2}}\|(\varphi_0^{>0},\varphi_1^{>0},\varphi_2^{>0})\|_{\ml{D}_{p,q}^s}&\mbox{if}\ s\in[0,1)\cup(1,2],\\
		(1+t)^{-(\frac{1}{2}+\frac{n}{2}+\frac{1}{2}\lfloor\frac{n}{2}\rfloor)(\frac{1}{p}-\frac{1}{q})+\frac{1}{2}+\frac{1}{2}\lfloor\frac{n}{2}\rfloor}\|(\varphi_0^{>0},\varphi_1^{>0},\varphi_2^{>0})\|_{\ml{D}_{p,q}^s}&\mbox{if}\ s=1,\\			
		(1+t)^{-\frac{3n}{4}(\frac{1}{p}-\frac{1}{q})+\frac{1}{2}+\frac{n}{4}-\frac{s}{2}}\|(\varphi_0^{>0},\varphi_1^{>0},\varphi_2^{>0})\|_{\ml{D}_{p,q}^s}&\mbox{if}\ s\in(2,+\infty).
	\end{cases}
\end{align*}
Furthermore, by subtracting the profile $\Psi(t,x)$, the following refined estimates hold:
\begin{align*}
	&\| (\varphi^{>0}-\Psi)(t,\cdot)\|_{\dot{H}^{s}_{q}}\lesssim\begin{cases}
		(1+t)^{-(\frac{1}{2}+\frac{n}{2}+\frac{1}{2}\lfloor\frac{n}{2}\rfloor)(\frac{1}{p}-\frac{1}{q})+\frac{1}{2}+\frac{1}{2}\lfloor\frac{n}{2}\rfloor}\|(\varphi_0^{>0},\varphi_1^{>0},\varphi_2^{>0})\|_{\widetilde{\ml{D}}_{p,q}^s}&\mbox{if}\ s=0,\\
		(1+t)^{-(1+\frac{n}{2}+\frac{1}{2}\lfloor\frac{n}{2}\rfloor)(\frac{1}{p}-\frac{1}{q})+1+\frac{1}{2}\lfloor\frac{n}{2}\rfloor-\frac{s}{2}}\|(\varphi_0^{>0},\varphi_1^{>0},\varphi_2^{>0})\|_{\widetilde{\ml{D}}_{p,q}^s}&\mbox{if}\ s\in(0,1],\\
		(1+t)^{-\frac{3n}{4}(\frac{1}{p}-\frac{1}{q})+\frac{n}{4}-\frac{s}{2}}\|(\varphi_0^{>0},\varphi_1^{>0},\varphi_2^{>0})\|_{\widetilde{\ml{D}}_{p,q}^s}&\mbox{if}\ s\in(1,+\infty).
	\end{cases}
\end{align*}
	\end{theorem}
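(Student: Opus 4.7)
The plan is to glue together the three frequency-localized estimates already in hand. Writing
\begin{align*}
|D|^s \varphi^{>0}(t,\cdot) = |D|^s \chi_1(|D|)\varphi^{>0}(t,\cdot) + |D|^s \chi_2(|D|)\varphi^{>0}(t,\cdot) + |D|^s \chi_3(|D|)\varphi^{>0}(t,\cdot),
\end{align*}
I would estimate the $\dot{H}^s_q$ norm term by term via the triangle inequality in $L^q$. The small-frequency piece is controlled directly by Proposition \ref{prop:L^m-L^q-small} in terms of $\|(\varphi_0^{>0},\varphi_1^{>0},\varphi_2^{>0})\|_{(L^p)^3}$, and this is the contribution that produces the polynomial rate stated in the theorem across the three regimes $s\in[0,1)\cup(1,2]$, $s=1$, and $s\in(2,+\infty)$.

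For the bounded-frequency piece, Proposition \ref{prop:L^m-L^q-middle} yields an $\mathrm{e}^{-ct}$ decay controlled by $(L^p)^3$, which is absorbed by the small-frequency polynomial rate. For the large-frequency piece, Proposition \ref{prop:L^q-L^q-large} gives an $\mathrm{e}^{-ct}$ decay controlled by the Bessel-potential norms encoded in the three factors of $\ml{D}_{p,q}^s$; the restriction $q\in(1,+\infty)$ in the statement comes precisely from the Mikhlin--H\"ormander multiplier theorem used in that proposition (which also explains the restriction $n\geqslant 2$, inherited from Proposition \ref{prop:L^m-L^q-small}). Summing the three contributions and retaining the worst of the exponential and polynomial rates yields the first assertion.

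For the refined estimate involving $\varphi^{>0} - \Psi$, I would apply the same three-zone decomposition to $\varphi^{>0} - \Psi$. In small frequencies, the second set of estimates in Proposition \ref{prop:L^m-L^q-small} already provides the improved polynomial rate in terms of $(L^p)^3$. In bounded and large frequencies, I would split and estimate $\varphi^{>0}$ and $\Psi$ separately: Propositions \ref{prop:L^m-L^q-middle} and \ref{prop:L^q-L^q-large} control the MGT part, while the second statement of Proposition \ref{prop:L^q-L^q-large}, together with a trivial bounded-frequency bound for $\Psi$ (whose Fourier symbol is smooth on compacta), takes care of $\chi_2(|D|)\Psi$ and $\chi_3(|D|)\Psi$. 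The additional factor $H_q^{[s-1]_+}$ imposed on $\varphi_2^{>0}$ in $\widetilde{\ml{D}}_{p,q}^s$ arises precisely from that large-frequency bound on $\Psi$.

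The main obstacle is bookkeeping rather than a new analytical idea: one must match the piecewise-defined polynomial rates from the small-frequency regime with the Bessel-potential regularity thresholds from the large-frequency regime through $s_0 = n|\tfrac{1}{q}-\tfrac{1}{2}|$, and verify that in each of the three $s$-regimes the slowest decay among the three zones is the small-frequency one, so that the exponential contributions from the bounded and large zones are absorbed without altering the form of the stated estimate.
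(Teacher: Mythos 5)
Your proposal is correct and matches the paper's (implicit) argument exactly: the paper offers no separate proof of this theorem beyond the instruction to combine Propositions \ref{prop:L^m-L^q-middle}, \ref{prop:L^m-L^q-small} and \ref{prop:L^q-L^q-large} over the three frequency zones, which is precisely your gluing scheme, including the observation that the extra $H_q^{[s-1]_+}$ regularity on $\varphi_2^{>0}$ in $\widetilde{\ml{D}}_{p,q}^s$ comes from the large-frequency bound on $\Psi$. Your additional remark that $\chi_2(|D|)\Psi$ needs a (trivial) separate exponential bound is a point the paper glosses over, and it is handled correctly.
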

	\begin{remark}
	By subtracting the function $\Psi(t,\cdot)$ in the $L^q$ norm, we always notice that the derived estimates for $\varphi^{>0}(t,\cdot)$ can be improved.  For this reason, we may explain it as the asymptotic profile of solution in the $L^q$ framework. Note that this profile $\Psi(t,x)$ is exactly the same as those in the $L^2$ setting (cf. \cite{Chen-Ikehata=2021,Chen-Takeda=2023}). In other words, the singular diffusion waves function equipped the combined data plays an essential role in the dissipative MGT equation.
\end{remark}
\begin{exam}
The acoustic velocity potential satisfies
\begin{align*}
\|\varphi^{>0}(t,\cdot)\|_{L^q}&\lesssim (1+t)^{-(1+\frac{n}{2}+\frac{1}{2}\lfloor\frac{n}{2}\rfloor)(\frac{1}{p}-\frac{1}{q})+\frac{3}{2}+\frac{1}{2}\lfloor\frac{n}{2}\rfloor}\|(\varphi_0^{>0},\varphi_1^{>0},\varphi_2^{>0})\|_{\ml{D}_{p,q}^0}.
\end{align*}
Furthermore, by subtracting its profile $\Psi(t,x)$, there is the improvement $(1+t)^{\frac{1}{2}(\frac{1}{p}-\frac{1}{q})-1}$ which is decay, precisely,
\begin{align*}
\| (\varphi^{>0}-\Psi)(t,\cdot)\|_{L^q}\lesssim (1+t)^{-(\frac{1}{2}+\frac{n}{2}+\frac{1}{2}\lfloor\frac{n}{2}\rfloor)(\frac{1}{p}-\frac{1}{q})+\frac{1}{2}+\frac{1}{2}\lfloor\frac{n}{2}\rfloor}\|(\varphi_0^{>0},\varphi_1^{>0},\varphi_2^{>0})\|_{\widetilde{\ml{D}}_{p,q}^0}.
\end{align*}
\end{exam}
\begin{exam}
	The gradient of acoustic velocity potential (i.e. the acoustic velocity $\mathbf{u}^{>0}=-\nabla\varphi^{>0}$ in irrotational flows) satisfies
	\begin{align*}
		\|\nabla \varphi^{>0}(t,\cdot)\|_{L^q}&\lesssim (1+t)^{-(\frac{1}{2}+\frac{n}{2}+\frac{1}{2}\lfloor\frac{n}{2}\rfloor)(\frac{1}{p}-\frac{1}{q})+\frac{1}{2}+\frac{1}{2}\lfloor\frac{n}{2}\rfloor}\|(\varphi_0^{>0},\varphi_1^{>0},\varphi_2^{>0})\|_{\ml{D}_{p,q}^1}.
	\end{align*}
It has a faster decay rate than the one of acoustic velocity potential itself carrying the improvement $(1+t)^{\frac{1}{2}(\frac{1}{p}-\frac{1}{q})-1}$, which is the so-called parabolic effect.
	Furthermore, by subtracting its profile $\Psi(t,x)$, there is the improvement $(1+t)^{-\frac{1}{2}(\frac{1}{p}-\frac{1}{q})}$ which is decay if $p< q$, precisely,
	\begin{align*}
		\| \nabla (\varphi^{>0}-\Psi)(t,\cdot)\|_{L^q}\lesssim (1+t)^{-(1+\frac{n}{2}+\frac{1}{2}\lfloor\frac{n}{2}\rfloor)(\frac{1}{p}-\frac{1}{q})+\frac{1}{2}+\frac{1}{2}\lfloor\frac{n}{2}\rfloor}\|(\varphi_0^{>0},\varphi_1^{>0},\varphi_2^{>0})\|_{\widetilde{\ml{D}}_{p,q}^1}.
	\end{align*}
Note that in the above we employed the equivalence of norms $\|\nabla f\|_{L^q}\approx \|\,|D|f\|_{L^q}$ due to $1<q<+\infty$. One may see \cite[Lemma 3.6]{Palmieri-Reissig=2018}.
\end{exam}

\section{Conservative MGT equation in the $L^q$ framework}\label{Section_inviscid-JMGT}\setcounter{equation}{0}
\hspace{5mm}Let us turn to the conservative MGT equation \eqref{Eq_MGT} with $\delta=0$. By introducing the good unknown $u=u(t,x)$ such that
\begin{align}\label{unknown}
u(t,x):=\tau\varphi_t^{=0}(t,x)+\varphi^{=0}(t,x),
\end{align}
it satisfies the free wave equation as follows:
\begin{align}\label{Eq-01}
\begin{cases}
u_{tt}-\Delta u=0,&x\in\mb{R}^n,\ t>0,\\
u(0,x)=\tau\varphi_1^{=0}(x)+\varphi_0^{=0}(x),\ u_t(0,x)=\tau\varphi_2^{=0}(x)+\varphi_1^{=0}(x),&x\in\mb{R}^n.
\end{cases}
\end{align}
We are going to derive some $L^p-L^q$ estimates for $\varphi^{=0}(t,\cdot)$ according to the well-established results for $u(t,\cdot)$ in \cite{RSS-1970,JCP-1980} and references therein.

Before stating our main result, let us take the closed triangle $\triangle_{P_1P_2P_3}$ with the vertices
\begin{align*}
P_1=\left(\frac{1}{2}+\frac{1}{n+1},\frac{1}{2}-\frac{1}{n+1}\right),\ P_2=\left(\frac{1}{2}-\frac{1}{n-1},\frac{1}{2}-\frac{1}{n-1}\right),\ P_3=\left(\frac{1}{2}+\frac{1}{n-1},\frac{1}{2}+\frac{1}{n-1}\right)
\end{align*}
for $n\geqslant 3$, and $P_2=(0,0)$, $P_3=(1,1)$ for $n=1,2$.
\begin{theorem}\label{Thm-Conser-MGT}
Let $(\frac{1}{p},\frac{1}{q})\in\triangle_{P_1P_2P_3}$ and $s\geqslant0$. The solution to the conservative MGT equation \eqref{Eq_MGT} with $\delta=0$ satisfies the following $L^p-L^q$ estimates:
\begin{align*}
\|\varphi^{=0}(t,\cdot)\|_{\dot{H}^s_q}\lesssim t^{1-n(\frac{1}{p}-\frac{1}{q})}\|(\varphi_0^{=0},\varphi_1^{=0},\varphi_2^{=0})\|_{(H^{s+1}_p\cap H^s_q)\times H^{s+1}_p\times H^{s}_p}
\end{align*}
for $t>0$.
\end{theorem}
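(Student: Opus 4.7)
The plan is to exploit the reduction already prepared in the excerpt. The good unknown $u=\tau\varphi_t^{=0}+\varphi^{=0}$ satisfies the free wave equation \eqref{Eq-01} with data $(\tau\varphi_1^{=0}+\varphi_0^{=0},\,\tau\varphi_2^{=0}+\varphi_1^{=0})$, while \eqref{unknown} itself is a linear first-order ODE in $t$ for $\varphi^{=0}$. Solving it by the integrating factor $\mathrm{e}^{t/\tau}$ yields the representation
\begin{align*}
\varphi^{=0}(t,x)=\mathrm{e}^{-t/\tau}\varphi_0^{=0}(x)+\frac{1}{\tau}\int_0^t\mathrm{e}^{-(t-r)/\tau}\,u(r,x)\,\mathrm{d}r,
\end{align*}
so the entire problem is reduced to a time convolution of a free-wave propagator against an exponentially decaying kernel.

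The second step is to feed in the classical $L^p$-$L^q$ estimates for the free wave equation on the admissible closed triangle $\triangle_{P_1P_2P_3}$, as in \cite{RSS-1970,JCP-1980}. For $(1/p,1/q)\in\triangle_{P_1P_2P_3}$ these give
\begin{align*}
\|u(r,\cdot)\|_{\dot H^s_q}\lesssim r^{1-n(\frac{1}{p}-\frac{1}{q})}\bigl(\|\tau\varphi_1^{=0}+\varphi_0^{=0}\|_{H^{s+1}_p}+\|\tau\varphi_2^{=0}+\varphi_1^{=0}\|_{H^s_p}\bigr),
\end{align*}
with loss of exactly one derivative, and this is absorbed by the triangle inequality into $\|(\varphi_0^{=0},\varphi_1^{=0},\varphi_2^{=0})\|_{H^{s+1}_p\times H^{s+1}_p\times H^s_p}$. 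The appearance of $H^{s+1}_p$ rather than $H^s_p$ on the first two slots, and $H^s_p$ on the third, is dictated precisely by the one-derivative gap in the wave estimate together with the shift induced by the initial velocity $u_t(0,\cdot)$.

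Finally, I would apply $|D|^s$ and take $L^q$ norms in the representation formula, using Minkowski's integral inequality to obtain
\begin{align*}
\|\varphi^{=0}(t,\cdot)\|_{\dot H^s_q}\lesssim\mathrm{e}^{-t/\tau}\|\varphi_0^{=0}\|_{\dot H^s_q}+\frac{1}{\tau}\int_0^t\mathrm{e}^{-(t-r)/\tau}\,\|u(r,\cdot)\|_{\dot H^s_q}\,\mathrm{d}r.
\end{align*}
The first summand is controlled by $\mathrm{e}^{-t/\tau}\|\varphi_0^{=0}\|_{H^s_q}$, and this is precisely what forces the extra $H^s_q$ requirement on $\varphi_0^{=0}$ in the target norm. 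The remaining task is the kernel estimate
\begin{align*}
\int_0^t\mathrm{e}^{-(t-r)/\tau}\,r^{1-n(\frac{1}{p}-\frac{1}{q})}\,\mathrm{d}r\lesssim t^{1-n(\frac{1}{p}-\frac{1}{q})},
\end{align*}
which I would justify by splitting $[0,t]=[0,t/2]\cup[t/2,t]$, using exponential smallness of the kernel on $[0,t/2]$ and the comparison $r\sim t$ on $[t/2,t]$, together with the rough short-time bound $\int_0^t r^\alpha\,\mathrm{d}r\lesssim t^{\alpha+1}$ for $\alpha>-1$ to handle the possible change of sign of the exponent $1-n(\frac{1}{p}-\frac{1}{q})$.

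The main obstacle lies in the bookkeeping at the corners of the admissible triangle. Away from the vertices one has slack and a direct stationary phase argument suffices, but at the three endpoints $P_1,P_2,P_3$ the loss of one derivative in the Peral--Miyachi--Brenner estimate is sharp and the polynomial rate $t^{1-n(1/p-1/q)}$ is extremal; securing the estimate uniformly across the closed triangle (rather than only in its interior) is the delicate point. Once this ingredient from the wave equation literature is in hand, the time convolution preserves the polynomial rate and the theorem follows.
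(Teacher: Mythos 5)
Your proposal is correct and follows essentially the same route as the paper: the same Duhamel/integrating-factor representation of $\varphi^{=0}$ in terms of the free-wave unknown $u$, the same Minkowski inequality step, the same classical $L^p$--$L^q$ wave estimates on the closed triangle, and the same splitting $[0,t]=[0,\frac{t}{2}]\cup[\frac{t}{2},t]$ for the convolution kernel, using $1-n(\frac{1}{p}-\frac{1}{q})>-1$. The endpoint issue you flag is not an obstacle here, since the cited wave estimates are already valid on the closed triangle and the paper simply invokes them.
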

\begin{remark}
If one interests in the estimate for any $t\geqslant0$, particularly an use in some nonlinear problems, we just need to restrict the pair $(\frac{1}{p},\frac{1}{q})$ belonging to the trapezoid with the vertices $P_2$, $P_3$ and
\begin{align*}
P_4=\left(\frac{1}{2}+\frac{1}{n},\frac{1}{2}\right),\ P_5=\left(\frac{1}{2},\frac{1}{2}-\frac{1}{n}\right),
\end{align*}
in which $1-n(\frac{1}{p}-\frac{1}{q})\geqslant0$ holds.
\end{remark}
\begin{proof}
We actually know from \eqref{unknown} that
\begin{align*}
\begin{cases}
\tau\varphi_t^{=0}+\varphi^{=0}=u,&x\in\mb{R}^n,\ t>0,\\
\varphi^{=0}(0,x)=\varphi_0^{=0}(x),&x\in\mb{R}^n,
\end{cases}
\end{align*}
whose solution is given via the Duhamel principle via
\begin{align*}
\varphi^{=0}(t,x)=\frac{1}{\tau}\int_0^t\mathrm{e}^{-\frac{1}{\tau}(t-\eta)}\,u(\eta,x)\,\mathrm{d}\eta+\mathrm{e}^{-\frac{1}{\tau}t}\,\varphi_0^{=0}(x).
\end{align*}
An application of the Minkowski inequality (in the integral form) leads to
\begin{align*}
\left\|\int_0^t\mathrm{e}^{-\frac{1}{\tau}(t-\eta)}\,u(\eta,\cdot)\,\mathrm{d}\eta\right\|_{L^q}&=\left(\int_{\mb{R}^n}\left|\int_0^t\mathrm{e}^{-\frac{1}{\tau}(t-\eta)}\,u(\eta,x)\,\mathrm{d}\eta\right|^q\mathrm{d}x\right)^{\frac{1}{q}}\\
&\lesssim\int_0^t\left(\int_{\mb{R}^n}\left|\mathrm{e}^{-\frac{1}{\tau}(t-\eta)}\,u(\eta,x)\right|^q\mathrm{d}x\right)^{\frac{1}{q}}\mathrm{d}\eta\\
&\lesssim\int_0^t\mathrm{e}^{-\frac{1}{\tau}(t-\eta)}\|u(\eta,\cdot)\|_{L^q}\,\mathrm{d}\eta.
\end{align*}
The well-known results in \cite{RSS-1970,JCP-1980} show
\begin{align*}
\|u(t,\cdot)\|_{L^q}\lesssim t^{1-n(\frac{1}{p}-\frac{1}{q})}\left(\|u(0,\cdot)\|_{H^1_p}+\|u_t(0,\cdot)\|_{L^p}\right)
\end{align*}
for any $(\frac{1}{p},\frac{1}{q})\in\triangle_{P_1P_2P_3}$. Therefore, we next study
\begin{align*}
\ml{I}(t):=\int_0^t\mathrm{e}^{-\frac{1}{\tau}(t-\eta)}\,\eta^{1-n(\frac{1}{p}-\frac{1}{q})}\,\mathrm{d}\eta.
\end{align*}
By separating the interval $[0,t]$ into $[0,\frac{t}{2}]$ and $[\frac{t}{2},t]$, one obtains
\begin{align*}
\ml{I}(t)&\lesssim\mathrm{e}^{-\frac{1}{2\tau}t}\int_0^{\frac{t}{2}}\eta^{1-n(\frac{1}{p}-\frac{1}{q})}\,\mathrm{d}\eta+t^{1-n(\frac{1}{p}-\frac{1}{q})}\int_{\frac{t}{2}}^t\mathrm{e}^{-\frac{1}{\tau}(t-\eta)}\,\mathrm{d}\eta\\
&\lesssim\mathrm{e}^{-\frac{1}{2\tau}t}\,t^{2-n(\frac{1}{p}-\frac{1}{q})}+\big(1-\mathrm{e}^{-\frac{1}{2\tau}t}\big)\,t^{1-n(\frac{1}{p}-\frac{1}{q})}\\
&\lesssim t^{1-n(\frac{1}{p}-\frac{1}{q})}
\end{align*}
due to $1-n(\frac{1}{p}-\frac{1}{q})>-1$ for any $(\frac{1}{p},\frac{1}{q})\in\triangle_{P_1P_2P_3}$. As a consequence, recalling the initial data in \eqref{Eq-01} and applying all derived estimates in the above, we may deduce
\begin{align*}
\|\varphi^{=0}(t,\cdot)\|_{\dot{H}^s_q}&\lesssim\int_0^t\mathrm{e}^{-\frac{1}{\tau}(t-\eta)}\|u(\eta,\cdot)\|_{\dot{H}^s_q}\,\mathrm{d}\eta+\mathrm{e}^{-\frac{1}{\tau}t}\|\varphi_0^{=0}\|_{\dot{H}^s_q}\\
&\lesssim\ml{I}(t)\left(\|\tau\varphi_1^{=0}+\varphi_0^{=0}\|_{H^{s+1}_p}+\|\tau\varphi_2^{=0}+\varphi_1^{=0}\|_{H^s_p}\right)+\mathrm{e}^{-\frac{1}{\tau}t}\|\varphi_0^{=0}\|_{\dot{H}^s_q}\\
&\lesssim t^{1-n(\frac{1}{p}-\frac{1}{q})}\|(\varphi_0^{=0},\varphi_1^{=0},\varphi_2^{=0})\|_{(H^{s+1}_p\cap H^s_q)\times H^{s+1}_p\times H^{s}_p}.
\end{align*}
Our proof is complete.
\end{proof}
	
\section{Final remarks}\label{Section_Final_Remark}\setcounter{equation}{0}
\hspace{5mm}We in the present paper have derived some $L^p-L^q$ estimates for the dissipative MGT equation in Theorem \ref{thm:L^m-L^q-estimate}
\begin{center}
	with the parabolic-like decay rate $-\frac{n}{2}(\frac{1}{p}-\frac{1}{q})$ among other factors,
\end{center} 
and for the conservative MGT equation in Theorem \ref{Thm-Conser-MGT}
\begin{center}
with the hyperbolic-like decay rate $-n(\frac{1}{p}-\frac{1}{q})$ among other factors.
\end{center}   This is mainly caused by the dissipation $-\delta\Delta\varphi_t$ in the MGT equation \eqref{Eq_MGT}. For another, due to the parabolic-like structure, the admissible range of $(\frac{1}{p},\frac{1}{q})$ in the dissipative case is larger than the one in the conservative case (the wave-like structure leads to the closed triangle range only).

Thanks to the derived $L^p-L^q$ estimates, we expect that they can be useful to study global (in time) behavior of solutions, including existence and asymptotic profiles, for some nonlinear MGT equations in the whole space $\mb{R}^n$. For example, the semilinear MGT equations with power nonlinearities $|\varphi|^m$ or $|\varphi_t|^m$ can be studied by constructing suitable time-weighted Sobolev spaces with time-dependent functions from our main results, i.e. Theorems \ref{thm:L^m-L^q-estimate} and \ref{Thm-Conser-MGT}. 

	\appendix
	\section{Tools from the Fourier analysis}\label{Appendix-A}
	\hspace{5mm}We present some tools from the Fourier analysis that have been applied in the dissipative MGT equation in the general $L^q$ framework.
	\begin{lemma}[Fourier Transform via Modified Bessel Function, \cite{Grafakos=2004}] \label{Modified-Bessel-Funct}
		Let $f(x)=f_0(|x|)$ be a radial function defined on $\mb{R}^n$, where $f_0$ is defined on $[0,+\infty)$. Then, the Fourier transform of $f$ is given by the formula
		\begin{align*}
			\widehat{f}(|\xi|)= c \int_0^{+\infty} f_0(r)\, r^{n-1} \widetilde{\mathcal{J}}_{\frac{n}{2}-1}(r|\xi|)\,\mathrm{d}r,
		\end{align*}
		where $\widetilde{\mathcal{J}}_\mu(s):=s^{-\mu}\mathcal{J}_\mu(s)$ is the so-called modified Bessel function with the classical Bessel function $\ml{J}_\mu(s)$ and $\mu\in\mb{N}_0$. It  holds analogously for the inverse Fourier transform.
	\end{lemma}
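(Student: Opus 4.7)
The plan is to prove the lemma by a two-step reduction: first pass to polar coordinates to isolate the angular integral, and then identify that angular integral with the classical Poisson integral representation of the Bessel function $\mathcal{J}_{\frac{n}{2}-1}$. By definition, for a radial function $f(x)=f_0(|x|)$, the Fourier transform reads
\begin{align*}
\widehat{f}(\xi)=\int_{\mathbb{R}^n}f_0(|x|)\,\mathrm{e}^{-ix\cdot\xi}\,\mathrm{d}x.
\end{align*}
The first step is to switch to spherical coordinates $x=r\omega$ with $r\geqslant 0$ and $\omega\in S^{n-1}$, which yields
\begin{align*}
\widehat{f}(\xi)=\int_0^{+\infty}f_0(r)\,r^{n-1}\,F(r;\xi)\,\mathrm{d}r,\qquad F(r;\xi):=\int_{S^{n-1}}\mathrm{e}^{-ir\omega\cdot\xi}\,\mathrm{d}\omega.
\end{align*}

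The second step is to show that $F(r;\xi)$ depends only on the product $r|\xi|$. This is a direct consequence of the rotational invariance of the surface measure on $S^{n-1}$: the change of variables $\omega\mapsto R\omega$ for $R\in SO(n)$ preserves $\mathrm{d}\omega$, so $F(r;\xi)=F(r|\xi|;e_n)$ where $e_n$ is the $n$-th coordinate vector. Parametrising the sphere by $\omega_n=\cos\theta$ with $\theta\in[0,\pi]$ and integrating out the transverse angles reduces $F$ to the one-dimensional integral
\begin{align*}
F(r|\xi|;e_n)=|S^{n-2}|\int_0^\pi \mathrm{e}^{-ir|\xi|\cos\theta}(\sin\theta)^{n-2}\,\mathrm{d}\theta.
\end{align*}
After the substitution $t=\cos\theta$, this is precisely the Poisson integral representation of the Bessel function, which (for half-integer or integer orders $\nu=\frac{n}{2}-1$) gives
\begin{align*}
F(r|\xi|;e_n)=c\,(r|\xi|)^{-(\frac{n}{2}-1)}\mathcal{J}_{\frac{n}{2}-1}(r|\xi|)=c\,\widetilde{\mathcal{J}}_{\frac{n}{2}-1}(r|\xi|)
\end{align*}
with a constant $c$ depending only on $n$.

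Combining the two steps and absorbing all dimensional constants into a single $c$ produces the stated identity. The inverse Fourier transform case is verbatim, since replacing $\mathrm{e}^{-ix\cdot\xi}$ by $\mathrm{e}^{ix\cdot\xi}$ only conjugates the integrand and the angular integral is still of the same Poisson form in the real variable $r|\xi|$. The main (and in fact only) nontrivial ingredient is the Poisson integral representation of $\mathcal{J}_{\frac{n}{2}-1}$; this is a classical identity that may simply be cited from \cite{Grafakos=2004} and is the actual source of the lemma. Consequently there is no essential obstacle beyond invoking rotational invariance and the standard Bessel identity.
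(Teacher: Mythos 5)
Your argument is correct and is exactly the classical derivation behind this lemma: the paper itself gives no proof (it is quoted directly from the cited reference), and that reference establishes it precisely by passing to polar coordinates, using rotational invariance of the surface measure to reduce the angular integral to $\int_0^\pi \mathrm{e}^{-ir|\xi|\cos\theta}(\sin\theta)^{n-2}\,\mathrm{d}\theta$, and invoking the Poisson integral representation of $\mathcal{J}_{\frac{n}{2}-1}$. The only point worth flagging is that the spherical reduction as written presupposes $n\geqslant 2$; for $n=1$ one checks the identity directly from $\widetilde{\mathcal{J}}_{-\frac{1}{2}}(s)=\sqrt{2/\pi}\cos s$, which is immediate.
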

	
	\begin{lemma}[Bernstein Theorem, \cite{Sj=1970}]\label{Bernstein-Theorem} Let $\widehat{f}\in H^N$ with $N>\frac{n}{2}$ and $n\geqslant 1$. Then, $\ml{F}^{-1}(\widehat{f})\in L^1$ and satisfies
		\begin{align*}
		\|\ml{F}^{-1}(\widehat{f})\|_{L^1}\lesssim\|\widehat{f}\|_{L^2}^{1-\frac{n}{2N}}\left(\sum\limits_{|\alpha|=N}\|\partial_{\xi}^{\alpha}\widehat{f}\|_{L^2}\right)^{\frac{n}{2N}}.
		\end{align*}
	\end{lemma}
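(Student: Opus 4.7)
The plan is the classical ``split and optimize'' argument for $L^1$ in terms of weighted $L^2$ control. Write
\[
\|\ml{F}^{-1}(\widehat{f})\|_{L^1}=\int_{|x|\leqslant R}|\ml{F}^{-1}(\widehat{f})(x)|\,\mathrm{d}x+\int_{|x|>R}|\ml{F}^{-1}(\widehat{f})(x)|\,\mathrm{d}x=:\mathrm{I}(R)+\mathrm{II}(R),
\]
where $R>0$ is a free parameter to be chosen at the end.

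For $\mathrm{I}(R)$, I would apply the Cauchy-Schwarz inequality on the ball $\{|x|\leqslant R\}$, using $|B_R|\lesssim R^n$, followed by the Plancherel equality, to obtain
\[
\mathrm{I}(R)\lesssim R^{\frac{n}{2}}\|\ml{F}^{-1}(\widehat{f})\|_{L^2}\lesssim R^{\frac{n}{2}}\|\widehat{f}\|_{L^2}.
\]

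For $\mathrm{II}(R)$, the key identity is that integration by parts in the inverse Fourier transform gives $x^{\alpha}\ml{F}^{-1}(\widehat{f})(x)=c_{\alpha}\ml{F}^{-1}(\partial_{\xi}^{\alpha}\widehat{f})(x)$ for any multi-index $\alpha$, so that the Plancherel equality yields $\|\,|x|^{N}\ml{F}^{-1}(\widehat{f})\|_{L^2}\lesssim\sum_{|\alpha|=N}\|\partial_{\xi}^{\alpha}\widehat{f}\|_{L^2}$ (after expanding $|x|^{2N}$ as a non-negative combination of monomials $x^{2\alpha}$ with $|\alpha|=N$). Then the Cauchy-Schwarz inequality with the weight $|x|^{-N}\cdot|x|^{N}$ gives
\[
\mathrm{II}(R)\leqslant\left(\int_{|x|>R}|x|^{-2N}\,\mathrm{d}x\right)^{\frac{1}{2}}\left(\int_{|x|>R}|x|^{2N}|\ml{F}^{-1}(\widehat{f})(x)|^2\,\mathrm{d}x\right)^{\frac{1}{2}}\lesssim R^{\frac{n}{2}-N}\sum\limits_{|\alpha|=N}\|\partial_{\xi}^{\alpha}\widehat{f}\|_{L^2},
\]
where the decisive use of the hypothesis $N>\frac{n}{2}$ is exactly to make $\int_{|x|>R}|x|^{-2N}\,\mathrm{d}x\lesssim R^{n-2N}$ finite.

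Putting the two bounds together I get
\[
\|\ml{F}^{-1}(\widehat{f})\|_{L^1}\lesssim R^{\frac{n}{2}}\|\widehat{f}\|_{L^2}+R^{\frac{n}{2}-N}\sum\limits_{|\alpha|=N}\|\partial_{\xi}^{\alpha}\widehat{f}\|_{L^2},
\]
and the proof concludes by minimizing the right-hand side in $R$, i.e.\ choosing $R$ so that both terms balance, namely $R^N=\big(\sum_{|\alpha|=N}\|\partial_{\xi}^{\alpha}\widehat{f}\|_{L^2}\big)/\|\widehat{f}\|_{L^2}$; substituting gives the stated geometric mean $\|\widehat{f}\|_{L^2}^{1-\frac{n}{2N}}\big(\sum_{|\alpha|=N}\|\partial_{\xi}^{\alpha}\widehat{f}\|_{L^2}\big)^{\frac{n}{2N}}$. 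There is no real obstacle here beyond bookkeeping: the only delicate point is ensuring the assumption $N>\frac{n}{2}$ is consumed precisely where needed, namely to get integrability of $|x|^{-2N}$ near infinity in the estimate of $\mathrm{II}(R)$; if one tried to replace $|x|^N$ by a general fractional power, one would lose the clean identity $x^{\alpha}\ml{F}^{-1}(\widehat{f})=c_{\alpha}\ml{F}^{-1}(\partial_{\xi}^{\alpha}\widehat{f})$ and the argument would require fractional derivatives.
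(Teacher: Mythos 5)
Your argument is correct. Note, however, that the paper itself gives no proof of this lemma: it is stated in Appendix~\ref{Appendix-A} purely as a quoted tool with a citation to \cite{Sj=1970}, so there is nothing internal to compare against. Your split-and-optimize argument is the standard proof of this inequality and is complete: Cauchy--Schwarz plus Plancherel on $\{|x|\leqslant R\}$, the weight $|x|^{-N}\cdot|x|^{N}$ together with the identity $x^{\alpha}\ml{F}^{-1}(\widehat{f})=c_{\alpha}\ml{F}^{-1}(\partial_{\xi}^{\alpha}\widehat{f})$ and the multinomial expansion of $|x|^{2N}$ on $\{|x|>R\}$, and the balancing choice $R^{N}=\big(\sum_{|\alpha|=N}\|\partial_{\xi}^{\alpha}\widehat{f}\|_{L^2}\big)/\|\widehat{f}\|_{L^2}$. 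You correctly isolate $N>\frac{n}{2}$ as entering only through the integrability of $|x|^{-2N}$ at infinity; the only (trivial) loose end is the degenerate case where one of the two norms vanishes, which forces $\widehat{f}=0$ since an $L^2$ function whose $N$-th derivatives all vanish must be zero.
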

	
	\begin{lemma}[Variant of Mikhlin-H\"omander Multiplier Theorem, \cite{Miyachi=1980}]\label{Minklin-Homander-Theorem}
		Let $1<q<+\infty$, $k=[\frac{n}{2}]+1$ and $b\geqslant0$. Let $\ml{M}\in\ml{C}^k(\mb{R}^n\backslash\{0\})$  such that $\ml{M}(\xi)=0$ if $|\xi|\leqslant 1$ and 
		\begin{align*}
			|\partial_{\xi}^{\alpha}\ml{M}(\xi)|\lesssim |\xi|^{-nb|\frac{1}{q}-\frac{1}{2}|}\left(A|\xi|^{b-1}\right)^{|\alpha|}
		\end{align*}
		for any $|\alpha|\leqslant k$ and $|\xi|\geqslant 1$, with a constant $A\geqslant 1$. Then, the operator $\ml{T}_{\ml{M}}:=\ml{F}^{-1}_{\xi\to x}(\ml{M}(t,\xi))\ast_{(x)}$ with a parameter $t$, defined by the action $(\ml{T}_{\ml{M}}f)(t,\cdot)=\ml{F}_{\xi\to x}^{-1}(\ml{M}(t,\xi)\widehat{f}(\xi))$, is continuously bounded from $L^q$ into itself satisfying
		\begin{align*}
			\|(\ml{T}_{\ml{M}}f)(t,\cdot)\|_{L^q}\lesssim A^{n|\frac{1}{q}-\frac{1}{2}|}\|f\|_{L^q}.
		\end{align*}
	\end{lemma}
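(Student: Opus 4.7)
The plan is to exploit the natural dilation symmetry of the hypothesis to remove the parameter $A$, and then to establish the $L^{q}$-bound for the reduced ($A=1$) multiplier via a Littlewood--Paley decomposition and interpolation between the endpoints $q=2$ and the Hardy space $H^{1}$, following the classical scheme of Miyachi \cite{Miyachi=1980}. The $t$-dependence plays no role and the estimates will be uniform in $t$.

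First, reduce to $A=1$. Assuming $b>0$ (the degenerate case $b=0$ gives $s:=nb|\frac{1}{q}-\frac{1}{2}|=0$ and is the standard Mikhlin--H\"ormander setting with no $A$-dependence), set $\lambda:=A^{1/b}\geqslant 1$ and $\tilde{\ml{M}}(\xi):=\ml{M}(\xi/\lambda)$. A direct chain-rule computation shows that $\tilde{\ml{M}}$ vanishes on $\{|\xi|\leqslant 1\}$ and satisfies the stated hypothesis with $A$ replaced by $1$, up to an overall constant $\lambda^{s}=A^{n|\frac{1}{q}-\frac{1}{2}|}$. Since the dilation $f\mapsto f(\lambda\,\cdot)$ conjugates $\ml{T}_{\ml{M}}$ to $\ml{T}_{\tilde{\ml{M}}}$ isometrically in operator norm on every $L^{q}$ (the two Jacobian factors $\lambda^{\pm n/q}$ cancel), the general case inherits exactly the claimed factor $A^{n|\frac{1}{q}-\frac{1}{2}|}$ from the $A=1$ version.

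With $A=1$, decompose $\ml{M}=\sum_{j\geqslant 0}\psi_{j}\ml{M}$ where $\psi_{j}$ is a Littlewood--Paley bump supported in $\{2^{j-1}\leqslant|\xi|\leqslant 2^{j+1}\}$, and write $K_{j}:=\ml{F}^{-1}(\psi_{j}\ml{M})$. At $q=2$ the index $s$ vanishes, so $\|\ml{M}\|_{L^{\infty}}\lesssim 1$ and Plancherel immediately yields $\|\ml{T}_{\ml{M}}\|_{L^{2}\to L^{2}}\lesssim 1$. The heart of the argument is the opposite endpoint: proving $\ml{T}_{\ml{M}}\colon H^{1}\to L^{1}$. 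For an $H^{1}$-atom $a$ supported in $B(x_{0},r)$ with $\|a\|_{\infty}\lesssim|B|^{-1}$ and $\int a=0$, the $2B$-part of $\|\ml{T}_{\ml{M}}a\|_{L^{1}}$ follows from $L^{2}$-boundedness and Cauchy--Schwarz; the exterior part reduces to the H\"ormander-type integral
\begin{align*}
\sup_{|y|\leqslant r}\int_{|x|\geqslant 2r}|K(x-y)-K(x)|\,\mathrm{d}x\lesssim 1, \qquad K=\sum_{j\geqslant 0}K_{j}.
\end{align*}

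Verifying this condition in the presence of the two-parameter scaling ($|\xi|^{-s}$ decay of $\ml{M}$ itself versus $|\xi|^{b-1}$ growth per derivative) will be the main obstacle. The idea is to apply Bernstein's theorem (Lemma \ref{Bernstein-Theorem}) both to $\psi_{j}\ml{M}$ and to its order-$N$ derivatives — which Fourier-invert to $|x|^{N}K_{j}$ in physical space — to obtain $L^{1}$-norms and weighted tail bounds for each $K_{j}$ concentrated at the physical scale $\sim 2^{-jb}$. One then splits the $j$-sum at the critical dyadic index $j^{*}$ where $2^{j^{*}b}r\sim 1$, using the cancellation $\int a=0$ on the low-frequency range $j\leqslant j^{*}$ and the crude $L^{1}$-bounds on the high-frequency range $j>j^{*}$; the resulting geometric series converges precisely when $|\frac{1}{q}-\frac{1}{2}|\leqslant s/(nb)$, which is exactly our hypothesis. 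Real interpolation between the $L^{2}\to L^{2}$ and $H^{1}\to L^{1}$ bounds then yields $L^{q}\to L^{q}$ for $1<q\leqslant 2$, and duality against the adjoint $\ml{T}_{\overline{\ml{M}}}$ (which satisfies the same hypotheses with $q'$ in place of $q$) extends the range to $2\leqslant q<+\infty$.
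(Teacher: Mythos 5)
This lemma is imported verbatim from Miyachi's paper \cite{Miyachi=1980}; the present article gives no proof of it, so your attempt can only be measured against the classical argument it is trying to reproduce. Your rescaling step is correct: with $\lambda=A^{1/b}$ the dilated multiplier $\ml{M}(\cdot/\lambda)$ satisfies the $A=1$ hypothesis up to the factor $\lambda^{nb|\frac{1}{q}-\frac{1}{2}|}=A^{n|\frac{1}{q}-\frac{1}{2}|}$, and multiplier norms are dilation invariant, so the $A$-dependence of the conclusion is correctly accounted for.

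The gap is in the endpoint-plus-interpolation scheme. For $1<q<2$ the hypothesis only gives $|\ml{M}(\xi)|\lesssim|\xi|^{-s}$ with $s=nb(\frac{1}{q}-\frac{1}{2})$, which is \emph{strictly less} than the decay $nb/2$ required for the operator to map $H^{1}$ into $L^{1}$; the model multiplier $|\xi|^{-s}\mathrm{e}^{i|\xi|^{b}}\chi_{\{|\xi|\geqslant1\}}$ with $s<nb/2$ is $L^{q}$-bounded but not $H^{1}$--$L^{1}$ bounded, so the endpoint estimate you propose to prove is simply false for the operator at hand. This is visible in your own sketch: an $H^{1}$-atom computation knows nothing about $q$, so the convergence condition for your geometric series must read $s\geqslant nb/2$, not ``$|\frac{1}{q}-\frac{1}{2}|\leqslant s/(nb)$''. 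Consequently real interpolation of the single operator between $L^{2}\to L^{2}$ and $H^{1}\to L^{1}$ cannot be invoked. The standard repair --- and what Miyachi actually does --- is to embed $\ml{M}$ into the analytic family $\ml{M}_{z}(\xi):=|\xi|^{s-\frac{nbz}{2}}\ml{M}(\xi)$, verify $L^{2}\to L^{2}$ bounds on $\mathrm{Re}\,z=0$ (where the multiplier is merely bounded, with polynomial growth in $\mathrm{Im}\,z$) and $H^{1}\to L^{1}$ bounds on $\mathrm{Re}\,z=1$ (where the decay is the full $nb/2$ and your atomic/H\"ormander-condition argument does close), and then apply Stein--Fefferman analytic interpolation at the intermediate value of $\mathrm{Re}\,z$ corresponding to $q$; duality then handles $q>2$ as you say. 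A minor further point: the kernel piece $K_{j}$ is concentrated at the physical scale $|x|\sim 2^{j(b-1)}$ dictated by the phase gradient, not at $2^{-jb}$, so the critical index $j^{*}$ in your splitting should be chosen accordingly.
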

	
	\begin{lemma}[Derivative of Compound Function, \cite{Simader=1972}] \label{FadiBruno'sformula1}
		The formula for higher-order derivatives of compound function $(f\circ g)(x)=f(g(x))$ holds
		\begin{align*}
			\partial_x^{\alpha}(f\circ g)=\sum\limits_{k=1}^{|\alpha|}(f^{(k)}\circ g)\left(\sum\limits_{\substack{\gamma_1+\cdots+\gamma_{k}\leqslant \alpha\\|\gamma_1|+\cdots+|\gamma_k|=|\alpha|,\ |\gamma_j|\geqslant 1}}\big(\partial_x^{\gamma_1}g(x)\big)\cdots\big(\partial_x^{\gamma_k}g(x)\big)\right)
		\end{align*}
		for any multi-index $\alpha$.
	\end{lemma}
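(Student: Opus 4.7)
The plan is to exploit the good unknown $u:=\tau\varphi_t^{=0}+\varphi^{=0}$, which reduces the third-order conservative MGT equation to a free wave equation coupled with a scalar first-order ODE in $t$, and then to invoke the classical $L^p$--$L^q$ decay estimates of Peral, Miyachi, and Brenner for the free wave equation.

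First I would verify the reduction. Differentiating $u$ in $t$ once gives $u_t=\tau\varphi_{tt}^{=0}+\varphi_t^{=0}$, and differentiating again yields $u_{tt}=\tau\varphi_{ttt}^{=0}+\varphi_{tt}^{=0}$. Substituting the conservative MGT equation \eqref{Eq_MGT} with $\delta=0$, i.e.\ $\tau\varphi_{ttt}^{=0}+\varphi_{tt}^{=0}=\Delta\varphi^{=0}+\tau\Delta\varphi_t^{=0}=\Delta u$, one obtains $u_{tt}-\Delta u=0$ with initial data $u(0,x)=\tau\varphi_1^{=0}+\varphi_0^{=0}$ and $u_t(0,x)=\tau\varphi_2^{=0}+\varphi_1^{=0}$, giving the free wave system \eqref{Eq-01}. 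The relation $\tau\varphi_t^{=0}+\varphi^{=0}=u$ together with the initial value $\varphi^{=0}(0,x)=\varphi_0^{=0}(x)$ is a first-order linear ODE in $t$ (pointwise in $x$), solved via the Duhamel principle by
\begin{align*}
\varphi^{=0}(t,x)=\frac{1}{\tau}\int_0^t\mathrm{e}^{-\frac{1}{\tau}(t-\eta)}\,u(\eta,x)\,\mathrm{d}\eta+\mathrm{e}^{-\frac{1}{\tau}t}\,\varphi_0^{=0}(x).
\end{align*}

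Next I would apply the homogeneous Bessel potential operator $|D|^s$ and take the $L^q$ norm of the above formula. By the Minkowski inequality in integral form, the convolution-in-time term is controlled by $\int_0^t\mathrm{e}^{-\frac{1}{\tau}(t-\eta)}\|u(\eta,\cdot)\|_{\dot{H}_q^s}\,\mathrm{d}\eta$. Since $|D|^s$ commutes with the free wave propagator, I can invoke the classical $L^p$--$L^q$ estimate of \cite{RSS-1970,JCP-1980} applied to $|D|^s u$: for $(1/p,1/q)\in\triangle_{P_1P_2P_3}$,
\begin{align*}
\|u(\eta,\cdot)\|_{\dot{H}_q^s}\lesssim \eta^{1-n(\frac{1}{p}-\frac{1}{q})}\bigl(\|u(0,\cdot)\|_{H_p^{s+1}}+\|u_t(0,\cdot)\|_{H_p^s}\bigr),
\end{align*}
and then expand the initial data of $u$ in terms of $(\varphi_0^{=0},\varphi_1^{=0},\varphi_2^{=0})$.

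Finally I would estimate the time integral $\mathcal{I}(t):=\int_0^t\mathrm{e}^{-(t-\eta)/\tau}\,\eta^{1-n(\frac{1}{p}-\frac{1}{q})}\,\mathrm{d}\eta$ by splitting $[0,t]=[0,t/2]\cup[t/2,t]$: on $[0,t/2]$ the factor $\mathrm{e}^{-(t-\eta)/\tau}\lesssim\mathrm{e}^{-t/(2\tau)}$ beats any polynomial growth, while on $[t/2,t]$ one pulls out $\eta^{1-n(1/p-1/q)}\lesssim t^{1-n(1/p-1/q)}$ and uses $\int_{t/2}^t\mathrm{e}^{-(t-\eta)/\tau}\,\mathrm{d}\eta\lesssim 1$. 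This gives $\mathcal{I}(t)\lesssim t^{1-n(1/p-1/q)}$ provided $1-n(1/p-1/q)>-1$, a condition that holds on $\triangle_{P_1P_2P_3}$. The remainder $\mathrm{e}^{-t/\tau}\|\varphi_0^{=0}\|_{\dot{H}_q^s}$ is absorbed into the $H_q^s$ norm of $\varphi_0^{=0}$ included on the right-hand side.

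The main conceptual step is the algebraic reduction to the free wave equation via the good unknown, which completely bypasses the need for phase space analysis of a third-order characteristic cubic; the only mildly delicate point is to check that $1-n(1/p-1/q)>-1$ throughout $\triangle_{P_1P_2P_3}$ so that the time integral $\mathcal{I}(t)$ does not accumulate an additional logarithmic factor. No obstruction is anticipated beyond this straightforward verification.
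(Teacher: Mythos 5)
Your proposal does not address the statement you were asked to prove. The statement is Lemma \ref{FadiBruno'sformula1}, the Fa\`a di Bruno formula for higher-order partial derivatives of a composition $(f\circ g)(x)=f(g(x))$ in the multi-index setting, cited from \cite{Simader=1972} and used in the paper only to control $\partial_\xi^\alpha \mathrm{e}^{\mu_{\mathrm{R}}t}$ in Lemma \ref{lem:auxiliary-lem-large}. What you have written instead is a proof of Theorem \ref{Thm-Conser-MGT}, the $L^p$--$L^q$ estimate for the conservative MGT equation via the good unknown $u=\tau\varphi_t^{=0}+\varphi^{=0}$ and the Peral--Strichartz wave estimates. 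That argument is essentially the one the paper gives for Theorem \ref{Thm-Conser-MGT}, but it has no bearing on the combinatorial identity in Lemma \ref{FadiBruno'sformula1}.

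To prove the actual statement you would need an induction on $|\alpha|$: the base case $|\alpha|=1$ is the chain rule $\partial_{x_i}(f\circ g)=(f'\circ g)\,\partial_{x_i}g$, and the inductive step applies one further derivative $\partial_{x_i}$ to each term $(f^{(k)}\circ g)\,(\partial_x^{\gamma_1}g)\cdots(\partial_x^{\gamma_k}g)$ via the product rule, which either raises $k$ to $k+1$ (when the derivative falls on $f^{(k)}\circ g$, producing a new factor $\partial_{x_i}g$) or increases one $|\gamma_j|$ by one (when it falls on a factor $\partial_x^{\gamma_j}g$), in both cases preserving the constraints $|\gamma_1|+\cdots+|\gamma_k|=|\alpha|+1$ and $|\gamma_j|\geqslant 1$. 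None of this appears in your submission, so the proof of the stated lemma is entirely missing.
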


	\section*{Acknowledgments} 
	Wenhui Chen is supported in part by the National Natural Science Foundation of China (grant No. 12301270, grant No. 12171317), Guangdong Basic and Applied Basic Research Foundation (grant No. 2025A, grant No. 2023A1515012044), 2024 Basic and Applied Basic Research Topic--Young Doctor Set Sail Project (grant No. 2024A04J0016). The authors thank Giovanni Girardi (Polytechnic University of Marche) for some discussions on Lemma \ref{lem-oscillating-sincos}.

\end{document}